\newtheorem{theorem}{Theorem}[section]     
\newtheorem{lemma}[theorem]{Lemma} 
\newtheorem{corollary}[theorem]{Corollary}
\theoremstyle{definition}
\theoremstyle{remark}
\newtheorem{remark}[theorem]{Remark}
\newtheorem{example}[theorem]{Example}
\newtheorem*{theorem*}{Theorem}
\newcommand{\mbA}{\mathbb A}
\newcommand{\mbZ}{\mathbb Z}
\newcommand{\mbQ}{\mathbb Q}
\newcommand{\mbC}{\mathbb C}
\newcommand{\mbR}{\mathbb R}
\newcommand{\pt}{\mathrm{pt}}
\newcommand{\dd}{\mathbf{d}}
\newcommand{\Tr}{\mathrm{Tr}}
\newcommand{\im}{\mathrm{im}}
\newcommand{\Hom}{\mathrm{Hom}}
\newcommand{\ad}{\mathrm{ad}}
\newcommand{\colim}{\mathrm{colim}}
\newcommand{\End}{\mathrm{End}}
\newcommand{\Supp}{\mathrm{Supp}}
\newcommand{\g}{\mathfrak{g}}
\newcommand{\p}{\mathfrak{p}}
\newcommand{\el}{\mathfrak{l}}
\newcommand{\U}{\mathfrak{u}}
\newcommand{\T}{\mathfrak{t}}
\newcommand{\gl}{\mathfrak{gl}}
\newcommand{\fr}{\mathrm fr}
\newcommand{\BM}{\mathrm{BM}}
\newcommand{\Crit}{\mathrm{Crit}}
\newcommand{\mbO}{\mathbb O}
\newcommand{\mbG}{\mathbb G}
\newcommand{\mbD}{\mathbb D}
\newcommand{\G}{\widehat{G}}
\newcommand{\mbN}{\mathbb N}
\begin{document}

\title{Hall induction for cotangent representations and wheel conditions}

\author{Danil Gubarevich}
\address[D. Gubarevich]{Université de Versailles St-Quentin, 45 Avenue des Etats Unis, 78000 Versailles, France}

\begin{abstract}
In this short note we study the Hall induction of cotangent representations of reductive groups. We prove torsion freeness of its equivariant Borel-Moore homology. In K-theory we find an analog of wheel conditions verified by the image of restriction map to the fixed point and consider examples.
\end{abstract}

\maketitle

\tableofcontents

\section{The results}

In this paper we consider the cotangent stack $T^*(V/G)$ of the quotient stack $V/G$ of a representation $V$ of complex reductive group $G$. We consider the equivariant Borel-Moore homology and equivariant K-theory of $T^*(V/G)\simeq \mu_V^{-1}(0)/G$, where $\mu_V:T^*V \to \g^*$ is the moment map. We adapt several proven results on cohomological Hall algebra(CoHA) of a quiver to this situation, where the role om Hall product is played by the Hall induction. To define the induction map we first use the dimensional reduction isomorphism to work with vanishing cycle cohomology and define the associative induction on the level of vanishing cycle cohomology. 

Our first statement is about torsion freeness of equivariant Borel-Moore homology. The statement involves a sufficiently big torus $T_s$ that is assumed to act on $\mu_V^{-1}(0)/G$. Suppose $T_s$ is subject to the following \textbf{assumptions}:

\begin{itemize}
     \item [$\cdot$] $T_s$ acts on $T^*V\times \g$, preserves $\mu_V^{-1}(0)\subset T^*V\times \g$, and commutes  with the action of $G$,
    \item [$\cdot$] the function $f:T^*V\times \g\to\mbC, ~(x,x^*,\xi)\mapsto \mu_V(x,x^*)(\xi)$ is $T_s$-invariant,
    
    \item [$\cdot$] $T_s$ contains two 1-dimensional subtori $\mbC_1^*, \mbC_2^*$ acting on $T^*V\times \g$ with weights $(1,-1,0), (1,0,-1)$, respectively.
    \end{itemize}

Then we show 
\begin{theorem}
Under the above assumptions, the 
$H^\BM_{G\times T_s}(\pt,\mbQ)$-module $H_{G\times T_s}^\BM(\mu_V^{-1}(0),\mbQ)$ is torsion free.
\end{theorem}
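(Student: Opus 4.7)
The strategy, adapting \cite{SV22} and \cite{D22}, is to construct an injective $H^*_{G\times T_s}(\pt,\mbQ)$-linear map
\begin{align*}
\Phi\colon H^\BM_{G\times T_s}(\mu_V^{-1}(0),\mbQ)\longrightarrow M,
\end{align*}
where $M$ is a module that is manifestly torsion-free over $H^*_{G\times T_s}(\pt,\mbQ)$. Once such a $\Phi$ is produced, torsion-freeness of the source is immediate.

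The map $\Phi$ is built from the critical Hall induction (\ref{KSHall}) and the dimensional reduction (\ref{ex1dimred}). Concretely, fix a generic cocharacter $\lambda_0\in X_*(T)$ whose Levi $L_{\lambda_0}$ equals the maximal torus $T$. Applying the critical Hall induction for $\nu=0$ (so that $L_\nu=G$) and $\lambda=\lambda_0$ (so that $L_\lambda=T$), and then using dimensional reduction on both sides, one obtains a map landing in a finitely generated free $H^*_{T\times T_s}(\pt,\mbQ)$-module $M$; take this as the target. Since $H^*_{T\times T_s}(\pt,\mbQ)$ is a polynomial ring containing $H^*_{G\times T_s}(\pt,\mbQ)$ as a subring, $M$ is visibly torsion-free over the latter.

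The heart of the argument is injectivity of $\Phi$. Write $\hbar_1,\hbar_2$ for the equivariant parameters of $\mbC_1^*,\mbC_2^*$. The weight hypothesis $(1,-1,0)$ and $(1,0,-1)$ on $T^*V\times\g$ implies that every $T\times T_s$-weight appearing in the normal bundles to the $T$-fixed components of $\mu_V^{-1}(0)$ is a nonzero linear form in $\hbar_1,\hbar_2$. Equivariant localization then identifies $\Phi\otimes\mbQ(\hbar_1,\hbar_2)$ with an isomorphism. The remaining, most delicate step is to upgrade this to injectivity before localization, i.e.\ to show that no nonzero class in the source is annihilated by a power of $\hbar_1$ or $\hbar_2$. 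This is done via a purity argument for the vanishing cycle complex $\mathcal{H}_{T^*V\times\g,f,0}$ in the spirit of \cite[\S 5]{SV22} and \cite{D22}, using the $T_s$-invariance of $f$ and the preservation of $\mu_V^{-1}(0)$ by $T_s$.

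The main obstacle is precisely this purity step: passing from injectivity after inverting $\hbar_1,\hbar_2$ to injectivity before. It is at this point that both subtori $\mbC_1^*$ and $\mbC_2^*$ are used essentially, as they provide the equivariant parameters needed for the mixed Hodge module structure associated to the vanishing cycle to be pure and for the corresponding $H^*_{G\times T_s}(\pt,\mbQ)$-module to be free on a set of pure classes.
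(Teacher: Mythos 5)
Your high-level skeleton (embed $H^\BM_{G\times T_s}(\mu_V^{-1}(0),\mbQ)$ into a manifestly torsion-free module, prove injectivity by localization plus a purity upgrade) matches the shape of the paper's argument, but there are two genuine problems. First, your construction of $\Phi$ is backwards: the critical Hall induction (\ref{KSHall}) with $\lambda=\lambda_0$ (Levi $T$) and $\nu=0$ (Levi $G$) goes \emph{from} $\mathcal{H}_{V,f,\lambda_0}$ \emph{to} $\mathcal{H}_{V,f,0}$, i.e.\ it maps the $T$-equivariant object into the $G$-equivariant one, so it cannot serve as a map out of $H^\BM_{G\times T_s}(\mu_V^{-1}(0),\mbQ)$ into a free $H_{T\times T_s}(\pt,\mbQ)$-module. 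No Hall induction is needed to define the candidate embedding: since $T^*V\times\g$ is equivariantly contractible, the relevant map is simply the restriction $j^*$ to the unique fixed point, landing in $H_{G\times T_s}$, and torsion-freeness is equivalent to injectivity of $j^*$ (as the paper's remark before the theorem records).

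Second, and more seriously, the entire content of the proof sits in the step you defer to ``a purity argument in the spirit of \cite{SV22} and \cite{D22}.'' The paper's actual argument runs: (a) Atiyah--Bott/GKM localization with respect to $\mbC^*_1$ (whose fixed locus in $\mu_V^{-1}(0)$ is a point, since its weights on $T^*V$ are $\pm1$) reduces the claim to the absence of $S$-torsion, where $S$ is the complement of the prime ideal $I_1=\ker(H_{G\times T_s}\to H_{\mbC^*_1})$; (b) a Leray spectral sequence and purity argument (Lemma \ref{eemb}, relying on \cite{H25a}) shows freeness over $k_2=H_{\mbC^*_2}$, so one may invert $k_2$ without losing information; (c) a \emph{second} dimensional reduction identifies the module with $H^\BM_{G\times T_s}(\{(x,a): a.x=0\},\mbQ)$, and after inverting $k_2$ one may replace $\g$ by the nilpotent cone $\mathcal{N}$; (d) the stratification of $\mathcal{N}$ by orbits together with purity and evenness of each $H_{Stab_\lambda}$ splits the long exact sequences and yields an injection into $\bigoplus_\lambda H_{Stab_\lambda}$; (e) since $\mbC^*_1$ acts with weight $0$ on $\g$ it lies in every stabilizer, so $\ker(H_{G\times T_s}\to H_{Stab_\lambda})\subset I_1$ and each summand is $S$-torsion-free. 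Your proposal contains none of (b)--(e); in particular the geometric inputs that make the ``purity upgrade'' work --- the reduction to $V\times\g$, the passage to the nilpotent cone, and the containment $\mbC^*_1\subset Stab_\lambda$ --- are exactly what is missing. Note also that the statement you aim for (``no class is annihilated by a power of $\hbar_1$ or $\hbar_2$'') is weaker than what is required, namely the absence of torsion with respect to the whole multiplicative set $S$.
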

 This is actually equivalent for the restriction map to the fixed point to be an embedding. An immediate corollary of this embedding is concentration of $H_{G\times T_s}^\BM(\mu_V^{-1}(0),\mbQ)$ in even homological degrees. To prove the theorem we adapt the argument from \cite{SV20} and \cite{D22} where torsion freeness of deformed preprojective Hall algebra was proved.

Our second statement is about conditions, verified by symmetric polynomials in the image of restriction map to the fixed point. Inspired by the paper \cite{Z},  where the image of the preprojective K-theoretic Hall algebra of surfaces to a shuffle algebra was studied. It was shown that polynomials $R\in K_{GL_n(\mbC)\times (\mbC^*)^2} \simeq \mbZ[q^{\pm},q^{\prime\pm}][z_1^{\pm},\dots,z_n^{\pm}]^{S_n}$ in the image verify
\begin{align*}
    R\vert_{z_j-qz_i=0, z_k-q^\prime z_j=0}=0
\end{align*}
for any triple $\{i\neq j\neq k\}\subset \{1,\dots,n\}$. In our setting this situation corresponds to $V$ the representation space of a 1-loop quiver, acted upon by $GL_n$ by conjugation.

We generalize this result. 

Let $V$ be a finite dimensional representation of a complex reductive group $G\times T_s$ for some torus $T_s$ leaving invariant the zero-set $\mu_V^{-1}(0)$ under the $G$-equivariant moment map $\mu_V: T^*V \to \g^*$. Suppose the fixed locus $\mu_V^{-1}(0)^{T\times T_s}$ is a point, the origin. Denote the equivariant embeddings by $\pt=\mu_V^{-1}(0)^{T\times T_s}\xrightarrow{i_0} V\oplus V^* \xleftarrow{p} \mu_V^{-1}(0)$. The map $i_0$ is lci and the map $p$ is proper. Then the operations $i_0^*$ and $p_*$ in equivariant K-theory are well-defined. Denote by $W$ the Weyl group of the pair $(G,T)$. After choosing a basis in $V$ we can talk about coordinate lines in $V$ and in $V^*$. Consider two coordinate lines $l\subset V$ and $l^\prime\subset V^*$. The lines are $T\times T_s$-invariant, denote by $\chi_l,~\chi_{l^\prime}$ their $T\times T_s$-characters.

\begin{theorem}
    The image under restriction map
    \begin{align*}
        K_{G\times T_s}(\mu_V^{-1}(0))\xrightarrow{i_0^*p_*}K_{G\times T_s}(\pt)
    \end{align*}
    is contained in the ideal 
    \begin{align}
        K_{T\times T_s}(\pt)^W\cap \bigcap_{\Pi} (1 - \chi_{l}^{-1},1 - \chi_{l^\prime}^{-1})
    \end{align}
    where the intersection is taken over the set $\Pi$ of all pairs of coordinate lines $l\subset V,~l^{\prime}\subset V^*$ such that 
    \begin{align*}
        l\oplus l^{\prime}\times_{V\oplus V^*} \mu_V^{-1}(0)=l\cup_{\{0\}}l^{\prime}
    \end{align*}
    scheme-theoretically.

\end{theorem}

The divisibility conditions discussed here are called the \textit{wheel conditions}. We illustrate this theorem by writing down wheel conditions for adjoint representations of semisimple groups and irreducible representations of $SL_2$.

\section{Acknowledgements}
We thank Ben Davison for useful remarks about earlier versions of this paper and Éric Vasserot for helpful discussions. The author is partially supported by International Laboratory of Cluster Geometry NRU HSE, RF Government grant, ag. № 075-15-2021-608 dated 08.06.2021

\section{Preliminaries}\label{Prelim}

By a complex variety we mean a finite type reduced scheme over $\mbC$. When a complex variety $X$ is equipped with an action of a complex linear algebraic group $G$ we will denote by $X/G$ the quotient stack. This is an Artin stack locally of finite type over $\mbC$. We will also consider the cotangent stack $T^*(X/G)$. This 
is a 0-shifted symplectic stack, the quotient of a derived fiber product $T^*(X/G)= T^*X \times^{\g^*}\{0\}/G$ where $T^*X \xrightarrow{\mu} \g^*$ is the $G$-equivariant moment map. Its classical truncation is isomorphic to $\mu^{-1}(0)/G$.  When we discuss homology or K-theory of $T^*(X/G)$ we refer to homology or K-theory of its classical truncation.

\subsection{Derived constructible category}
For $X$ a complex variety let $D^b_c(X, \mbQ)$ be the full triangulated subcategory of derived category of constructible sheaves $D^b(Sh_c(X,\mbQ))$, whose objects are bounded complexes of sheaves on $X(\mbC)$ of $\mbQ$-vector spaces with constructible cohomology. In what follows we will write $f_*$ instead of $Rf_*$ meaning derived functors. 

The formalism of six functors in this context means the assignment 

\begin{enumerate}
    \item for every $X$ the category $D^b_c(X, \mbQ)$
    \item for every morphism $X\xrightarrow{f}Y$ two pairs of adjoint functors $(f^*,f_*), (f_!,f^!)$
    
\[\begin{tikzcd}
	{D^b_c(X, \mbQ)} & {D^b_c(Y, \mbQ)}
	\arrow["{(f_*, f_!)}", from=1-1, to=1-2]
	\arrow["{(f^*,f^!)}", shift left=3, from=1-2, to=1-1]
\end{tikzcd}\]
    \item for every $X$ and every $\mathcal{F}\in D^b_c(X,\mbQ)$ the pair of adjoint functors $(-\otimes^L\mathcal{F},RHom(\mathcal{F},-))$, endowing $D^b_c(X,\mbQ)$ with a unital symmetric monoidal structure, with unit $\mbQ_X$.
\end{enumerate}

The category $D^b_c(X,\mbQ)$ is endowed with Verdier duality functor $\mbD: D^b_c(X,\mbQ)\to D^b_c(X,\mbQ)$, $\mbD \mathcal{F} = RHom(\mathcal{F}, (X\to \pt)^!\mbQ)$. Its main property is that there is a natural isomorphism of functors $\mathrm{id}\simeq \mbD\circ\mbD$.

We recollect some important compatibilities between these functors that we use, see \cite{Achar} for details and proofs.

\begin{enumerate}
    \item  For any $X\xrightarrow{g}Y\xrightarrow{f}Z$  $(f\circ g)_*\simeq f_*\circ g_*$, $(f\circ g)^* \simeq g^*\circ f^*$ and similarly for $f_!, f^!$.
    \item $f^*$ is monoidal: for any $\mathcal{F},G$ there $f^*(\mathcal{F}\otimes G)\simeq f^*\mathcal{F}\otimes f^*G$ 
    \item There exist a natural transformation $f_!\to f_*$ that is an isomorphism for proper $f$
    \item smooth pullback: for $X\xrightarrow{f}Y$ a smooth map of relative dimension $d$ the functor $f^!$ has a simple description: there is a natural isomorphism $f^!\simeq f^*[2d]$, compatible with composition of smooth maps. Here $[-]$ is the shift functor. In particular, $j^!\simeq j^*$ for an open embedding. 

    \item\label{lcipullback} lci pulback: let $Z\xrightarrow{f}Y$ be a locally complete intersection(lci) morphism of the form 
    \[\begin{tikzcd}
	Z & X \\
	& Y
	\arrow["s"', hook, from=1-1, to=1-2]
	\arrow["p"', shift left=3, curve={height=18pt}, from=1-2, to=1-1]
	\arrow["h", from=1-2, to=2-2]
\end{tikzcd}\]
for $s$ a regular embedding of codimension $c$ that is a section of smooth map $p$(of relative dimension $c$) and $h$ a smooth map of relative dimension $d$. The condition on $h$ gives $h^!\mbQ_Y \simeq \mbQ_X[2d]$ and the condition on $s$ gives $s^!\mbQ_X = s^!p^! \mbQ_Z[-2c] = \mbQ_Z[-2c]$. Then $f^!\mbQ_Y=s^!h^!\mbQ_Y=\mbQ_Z[2(d-c)]$. Applying Verdier duality to the adjunction $f_!f^!\mbQ_Y\to \mbQ_Y$, one gets lci pullback $f^!: \mbD \mbQ_Y \to f_* \mbD\mbQ_Z[2(c-d)]$

    \item Verdier duality commutes with sheaf operations: for any $X\xrightarrow{f}Y$ there are natural isomorphisms 
    \begin{align*}
        &\mbD f_*\simeq f_! \mbD, &\mbD f_! \simeq f_* \mbD\\
        & \mbD f^*\simeq f^! \mbD, &\mbD f^! \simeq f^* \mbD
    \end{align*}

\item\label{disttrian} open-closed distinguished triangles: suppose $U\xrightarrow{i}X$ is an open embedding and $X \xleftarrow[]{j} Z$ is its closed complement. In $D^b_c(X,\mbQ)$ there are distinguished triangles
\begin{align*}
    &i_*i^! \to id \to j_*j^* \xrightarrow{+1}\\
    &j_!j^*\to id\to i_*i^*\xrightarrow{+1}
\end{align*}
\item $D^b_c(\pt,\mbQ)=D^b(\mbQ-Vect)$
    \end{enumerate}

Denote by $a_X: X\to \pt$ a map to a point. For $\mathcal{F}\in D^b_c(X,\mbQ)$ 
denote by 
$H^i(X,\mathcal{F})=H^ia_{X,*}\mathcal{F}$ its cohomology, $H^i_c(X,\mathcal{F})=H^ia_{X,!}\mathcal{F}$ compactly supported cohomology,  $H_i(X,\mathcal{F})=H^{-i}a_{X,!}\mbD\mathcal{F}$ homology and by
$H_i^{\BM}(X,\mathcal{F}) = H^{-i}(a_{X,*}\mbD\mathcal{F})$ its Borel-Moore homology. When $\mathcal{F}$ is a constant sheaf $\mbQ_X$, one recovers (singular)cohomology $H^i(X, \mbQ)$ and other invariants of $X$.

\subsection{Borel-Moore homology} 

For a complex variety $X$ one defines its Borel-Moore homology as $H_i^\BM(X,\mbQ)=H^{-i}(a_{X,*}\mbD \mbQ_X)$. It is related to the dual compactly supported cohomology as $H_i^\BM(X,\mbQ)=H^{-i}(\mbD a_{X,!} \mbQ_X)=H^i_c(X,\mbQ)^\vee$. When $X$ is proper  $H_i^\BM(X,\mbQ)=H_i(X,\mbQ)$. When $X$ is smooth $H_i^\BM(X,\mbQ) = H^{-i+2\dim X}(X,\mbQ)$. 

Let $X$ be equipped with an action of a linear group $G$, assume $X$ is quasi-projective with a fixed $G$-linearized very ample line bundle. Its equivariant compactly supported cohomology $H_{c,G}(X,\mbQ)$ is defined via the limiting construction as follows. 
We assume $G$ is a complex algebraic subgroup of $GL_n(\mbC)$ for some $n$. For $N\geq n$ denote by $\fr(n,N)$ the variety with a free $G$-action of tuples of $n$ linearly independent vectors in $\mbC^N$. The group $G$ acts freely on $V\times fr(n,N)$ by $g\cdot (v,h)=(g\cdot v,g^{-1}v)$, denote by $X_N:=X\times_G fr(n,N)$ the quotient variety.  The embedding $\mbC^N \to \mbC^{N+1}$, sending $(x_1,\dots,x_N)$ to $(x_1,\dots,x_N,0)$, induces closed embeddings $\fr(n,N)\to \fr(n,N+1)$ and $X_N\xrightarrow{i_N}X_{N+1}$.

Suppose $X$ is smooth, then varieties $X_N$ are smooth as well. Applying Verdier duality to a morphism $\mbQ_{X_{N+1}}\to i_{N,*} \mbQ_{X_N}$,
we get a morphism 
\begin{align}
i_{N,!}\mbQ_{X_N}[2 \dim X_N] \to \mbQ_{X_{N+1}}[2 \dim X_{N+1}].
\end{align}

Then, applying $(X_{N+1}\to \pt)_!$, we get a morphism in $D^b_c(\pt,\mbQ)$ 
\begin{align*}
    H_c(X_N, \mbQ)[2 \dim X_N]\to H_c(X_{N+1},\mbQ)[2 \dim X_{N+1}]. 
\end{align*}
One defines (for any $X$, possibly singular)
\begin{align*}
    H_c(X/G, \mbQ):=\colim_{N\to \infty} ~H_c(X_N,\mbQ)[2 \dim \fr(n, N)]
\end{align*}

and 
\begin{align*}
    H^\BM(X/G, \mbQ):= H_c(X/G, \mbQ)^\vee =\lim_{N\to \infty} ~H_c(X_N,\mbQ)^\vee[-2 \dim \fr(n, N)]
\end{align*}

The non-compactly supported version is defined by applying $(X_{N+1}\to \pt)_*$ to
\begin{align*}
    \mbQ_{X_{N+1}}\to i_{N,*}\mbQ_{X_N} 
\end{align*}
to get linear maps
\begin{align*}
    H(X_{N+1}, \mbQ)\to H(X_{N}, \mbQ). 
\end{align*}
One defines 
\begin{align*}
    H(X/G, \mbQ):=\lim_{N\to \infty}~H(X_{N}, \mbQ).
\end{align*}

Assume $X/G\simeq Y/H$ is an isomorphism of stacks. By [Prop 16,\cite{EG}], there is an isomorphism 
$H^\BM_{i+2\dim G, G}(X,\mbQ)\simeq H^\BM_{i+2\dim H, H}(Y,\mbQ)$. Then one can relate homology of a stack with equivariant homology by 
\begin{align}\label{BMeq}
H^\BM_i(X/G,\mbQ)=H^\BM_{i+2\dim G, G}(X,\mbQ),  
\end{align}
\begin{align*}
H^i(X/G,\mbQ)=H^i_G(X,\mbQ).
\end{align*}

We collect some standard properties of the functors $H^\BM$ and $H^\BM_G$.

\textbf{Properties}

\begin{itemize}

    \item  proper pushforward: let $X\xrightarrow{f}Y$ be a  proper map. Then the Verdier dual of the adjunction $\mbQ_{Y}\to f_{*}\mbQ_{X}$ gives $H_{i}^\BM(X,\mbQ)\to H_{i}^\BM(Y,\mbQ)$
    \item  lci pullback: let $X\xrightarrow{f}Y$ be lci of the form $(\ref{lcipullback})$ of relative dimension $d-c$. Then the map $f^! :\mbD\mbQ_Y\to f_*\mbD\mbQ_X[2(c-d)]$ gives $H_i^\BM(Y,\mbQ)\to H^\BM_{i-2(c-d)}(X,\mbQ)$

    \item refined pullback: suppose the square is Cartesian
    \[\begin{tikzcd}
	{X^{\prime}} & {Y^{\prime}} \\
	X & Y
	\arrow["h", from=1-1, to=1-2]
	\arrow["l", from=1-1, to=2-1]
	\arrow["p", from=1-2, to=2-2]
	\arrow["f", from=2-1, to=2-2]
\end{tikzcd}\] with $f$ lci of relative dimension $d$. Composing $p^!$ with lci pullback $f^!: \mbD\mbQ_Y\to f_*\mbD\mbQ_X[2d]$, one gets $\mbD\mbQ_{Y^\prime}\to h_*\mbD\mbQ_{X^\prime}[2d]$. Taking cohomology, one gets 
\begin{align*}
    (f,h)^!:H_i^\BM(Y^\prime,\mbQ)\to H^\BM_{i+2d}(X^\prime,\mbQ).
\end{align*}
    If $h$ is also lci, then $(f,h)^!=h^!$.
\end{itemize}

    The same functorialities hold for the functor $H^\BM_G$.

   \begin{itemize} 
    \item  open-closed long exact sequence:
    the first distinguished triangle in $(\ref{disttrian})$ applied to $\mbD\mbQ_X$ gives a long exact sequences
    \begin{align*}
        \to H_i^\BM(Z,\mbQ)\to H_i^\BM(X,\mbQ)\to H_i^\BM(U,\mbQ)\to H_{i-1}^\BM(Z,\mbQ)\to
    \end{align*}

Assuming $Z\xrightarrow{i} X$ is a $G$-equivariant closed embedding, one gets a long exact sequence of equivariant BM-homology
    
    \item homotopy invariance: $H_i^\BM(\mbR^n,\mbQ)=\mbQ$ for $i=n$ and $0$ otherwise

\item Intersection pairing: 
For any $\mathcal{F},G\in D_c^b(X,\mbQ)$ and any $f:X\to Y$ one has a morphism in $D^b_c(Y,\mbQ)$
\begin{align*}
f_*\mathcal{F}\otimes f_*G \to f_*f^*(f_*\mathcal{F}\otimes f_*G) = f_*(f^*f_*\mathcal{F}\otimes f^*f_*G)\to f_*(\mathcal{F}\otimes G),
\end{align*}
since $f^*$ is monoidal.

For any $X$, applying to $\mathcal{F}=G=\mbQ_X$, one gets a bilinear pairing
\begin{align*}
H^i(X,\mbQ)\otimes H^j(X,\mbQ) \to H^{i+j}(X,\mbQ). 
\end{align*}

For $X$ smooth, applying to $\mathcal{F}=G=\mbD\mbQ_X$ one gets a bilinear pairing 
\begin{align*}
H_i^\BM(X,\mbQ)\otimes H_j^\BM(X,\mbQ) \to H_{i+j-2\dim X}^\BM(X,\mbQ) 
\end{align*}

The bilinear pairing in equivariant case($X$ is again smooth) is defined similarly
\begin{align*}
H_i^\BM(X/G,\mbQ)\otimes H_j^\BM(X/G,\mbQ) \to H_{i+j-2\dim X/G}^\BM(X/G,\mbQ)
\end{align*}
as the composition
\begin{align*}
    H_i^\BM(X/G,\mbQ)\otimes H_j^\BM(X/G,\mbQ) = \lim_{N,N^\prime} H^\BM_{i+2d_N}(X_N)\otimes H^\BM_{j+2d_{N^\prime}}(X_{N^\prime})\to\\ \to\lim_M H^\BM_{i+2d_M}(X_M)\otimes H^\BM_{j+2d_M}(X_M)\to \lim_M H^\BM_{i+j+2d_M-2\dim X/G}(X_M)= H^\BM_{i+j-2\dim X/G}(X/G,\mbQ),
\end{align*}
where $d_\star =\dim \fr(n,\star)$.

\item When $X$ is proper, the natural map $H_i(X/G,\mbQ)\to H_i^\BM(X/G,\mbQ)$ is an isomorphism. When $X$ is smooth, the natural map 
\begin{align}\label{PD}
H^{-i+2\dim X/G}(X/G,\mbQ)\to H_i^\BM(X/G,\mbQ), ~ \alpha \to \alpha \cap [X]
\end{align}
is an isomorphism, called the Poincaré duality. In particular, 
$H^i_G(\pt,\mbQ)\simeq H_{-i, G}^\BM(\pt,\mbQ)$.

\end{itemize}

\subsection{K-theory}

Let $X$ be a complex quasi-projective variety with an action of a complex linear group $G$.
One defines the equivariant K-theory $K_G(X)$ of $X$ as the Grothendieck group of an abelian category of $G$-equivariant coherent sheaves on $X$. We list below some of its properties that will be used and refer to \cite{CG} for details.

\textbf{Properties}\label{prsKth}
\begin{itemize}
    \item K-theory of a point: A coherent sheaf on a pont is a finite dimensional complex $G$-representation. Denote by $R_G$ the ring of characters of $G$, then $K_G(\pt)=R_G$. For any $X$, $K_G(X)$ is a module over $K_G(\pt)$.

    \item pullback: let $X$ and $Y$ be smooth quasi-projective varieties and $Y\xrightarrow{i} X$ be a closed $G$-equivariant embedding. One defines $f^*: K_G(X)\to K_G(Y)$ as a finite sum 
    \begin{align*}
        f^*[\mathcal{F}]=\sum_{i\geq 0}(-1)^i[L^if^* \mathcal{F}] 
    \end{align*}
where to compute the $G$-equivariant sheaves $L^if^* \mathcal{F}=Tor_i^{\mathcal{O}_X}(f^*F,\mathcal{O}_Y)$ one picks a (finite)locally free $G$-equivariant resolution $F^\bullet$ of (non-derived)$f^*\mathcal{F}$
\begin{align*}
    \dots \to F^1\to F^0\to f^*\mathcal{F}\to0
\end{align*}
and computes the cohomology
$L^if^* \mathcal{F} = H^i(F^\bullet\otimes_{\mathcal{O}_X} \mathcal{O}_Y)$.
\item refined pullback: given a Cartesian diagram 
\[\begin{tikzcd}
	{X^\prime} & X \\
	{Y^\prime} & Y
	\arrow["{g^\prime}", from=1-1, to=1-2]
	\arrow["{f^\prime}", from=1-1, to=2-1]
	\arrow["f", from=1-2, to=2-2]
	\arrow["g", from=2-1, to=2-2]
\end{tikzcd}\]
with $f$ lci between smooth varieties, the refined pullback $f^!: K_G(Y^\prime)\to K_G(X^\prime)$ is defined as a finite sum
\begin{align*}
    f^!([\mathcal{F}]) = \sum_{i\geq 0}(-1)^i[Tor_i^{\mathcal{O}_Y}(\mathcal{O}_X,\mathcal{F})].
\end{align*}

\item base change \cite[Lemma 2.5]{Z}: given a Cartesian diagram 
\[\begin{tikzcd}
	{X^\prime} & X \\
	{Y^\prime} & Y
	\arrow["{g^\prime}", from=1-1, to=1-2]
	\arrow["{f^\prime}", from=1-1, to=2-1]
	\arrow["f", from=1-2, to=2-2]
	\arrow["g", from=2-1, to=2-2]
\end{tikzcd}\]
with $f$ lci and $g$ proper, then $f^*g_*=g^\prime_*f^! :K_G(Y^\prime)\to K_G(X)$.

\item proper pushforward: given a proper $G$-equivariant map $f: X\to Y$ between quasi-projective varieties
    and a class $[\mathcal{F}]\in K_G(X)$, one defines $f_*: K_G(X)\to K_G(Y)$ by
    \begin{align*}
        f_*[\mathcal{F}]=\sum_{i\geq 0}(-1)^i [R^if_*\mathcal{F}]
    \end{align*}
where the sum is finite;

     \item an isomorphism  $K(X/G)\simeq K_G(X)$.
\end{itemize}
We will often use the shorthands $H_G=H(BG,\mbQ)$ and $K_G=K(BG)$.

\section{Vanishing cycles}\label{vancyc}

In this section we recall the definition of vanishing cycles and the statement of dimensional reduction from \cite{D16} to use it in Section \ref{HallInd}.

Let $X$ be a complex algebraic variety and $f: Y\to \mbC$ be a regular function on it. Denote by $X^*:=f^{-1}(\mbC^*)$ and $X_0:=f^{-1}(0)$. Consider the diagram with Cartesian squares
\[\begin{tikzcd}
	{\widetilde{X^*}} & {X^*} & X & {X_0} \\
	{\mbC=\widetilde{\mbC^*}} & {\mbC^*} & \mbC & 0
	\arrow["\pi", from=1-1, to=1-2]
	\arrow[from=1-1, to=2-1]
	\arrow["j", hook, from=1-2, to=1-3]
	\arrow[from=1-2, to=2-2]
	\arrow["f", from=1-3, to=2-3]
	\arrow["{{i_0}}"', hook', from=1-4, to=1-3]
	\arrow[from=1-4, to=2-4]
	\arrow["{\mathrm{exp}}", from=2-1, to=2-2]
	\arrow[hook, from=2-2, to=2-3]
	\arrow[hook', from=2-4, to=2-3]
\end{tikzcd}\]

One defines \cite{Dimca} the nearby cycle functor $D_c^b(X,\mbQ)\to D_c^b(X_0,\mbQ)$ by
\begin{align*}
    \psi_f:=i_0^*(\pi\circ j)_*(\pi\circ j)^*
\end{align*}

The vanishing cycle functor is defined as the cone of a canonical morphism 
\begin{align*}
    i_0^*\xrightarrow{can} \psi_f\to\phi_f\to i_0^*[1]
\end{align*}

We give several examples when  these functors are easy to compute

\begin{example}
     For $f=0$ one has $\psi_f=0$ and $\phi_f=[1]$
\end{example}

\begin{example}
    $\mathcal{F} = i_{0,*}V$ the skyscaper sheaf at $0\xrightarrow{i_0} \mbC$ and $f=t: \mbC\to \mbC$ is the coordinate. Then again $\psi_t \mathcal{F}=0$, $\phi_t\mathcal{F} = V[1]$. 
\end{example}

\begin{example}
    Consider $Y=\mbC$ together with $\mbC\xrightarrow{f}\mbC,~z\mapsto z^n$. We compute the functors $\psi_f, \phi_f$ applied to a constant sheaf $\mbQ$. By the standard property, stalk cohomology of the nearby cycles compute cohomology of the Milnor fiber
    \begin{align*}
H^i(MF_p,\mbQ)=\mathcal{H}^i(\mbC,\psi_f\mbQ)_p=H^i((\psi_f\mbQ)_p).
    \end{align*}
    In our situation, the fiber at zero is one point, and for $\delta\neq 0~$ $f^{-1}(\delta)$ consists of $n$ points. Thus the Milnor fiber at 0 consists of $n$ points.  
    We have $\psi_f \mbQ = \mbQ_0^n$ is a sheaf supported at 0 with fiber the vector space of dimension $n$. The canonical map $\mbQ_0=i_0^*\mbQ\to \mbQ_0^n$ is diagonal embedding $1\mapsto (1,\dots,1)$. Then $\phi_f\mbQ = Cone[\mbQ_0 \xrightarrow{(1,\dots,1)}\mbQ^n_0]\simeq \mbQ_0^{n-1}$ with cohomology $H^0(\phi_f\mbQ) = \mbQ^{n-1}$, $H^{-1}(\phi_f\mbQ)=0$.
\end{example}

\begin{example}
    Consider $\mathcal{F} = j_*\mathcal{L}$ where $\mathcal{L}$ is a local system on $\mbC^*\xrightarrow{j}\mbC \xrightarrow{t} \mbC$ given by its stalk $V$  and its endomorphism $T\in \End(V)$, coming from monodromy representation $\mbZ\simeq\pi_1(\mbC^*)\to \mathrm{GL(V)},~1\mapsto T$. 
    We have $\psi_t\mathcal{F} = V$ and $i_0^*\mathcal{F} \simeq \ker (T-id)$ and $\phi_t\mathcal{F} \simeq V/\ker (T-id)\simeq \im(T-id)$.
\end{example}
 
Recall an equivalent definition of vanishing cycles from \cite{D16}, used to formulate dimensional reduction. 

Denote by $X_+:=f^{-1}(\mbR_{>0})$ and $X_0 := f^{-1}(0)$. One defines the nearby cycle functor by
\begin{align*}
    \psi_f:=(X_0\to X)_*(X_0\to X)^*(X_+\to X)_*(X_+\to X)^*
\end{align*}
from the category $D_c^b(X,\mbQ)$ to itself, of bounded complexes of sheaves of $\mbQ$-vector spaces with constructible cohomologies.

The vanishing cycle functor is defined as the cone of a natural morphism
\begin{align*}
    \phi_f:= Cone((X_0\to X)^*(X_0\to X)_*\to \psi_f)
\end{align*}
that is for any $\mathcal{F}\in D_c^b(X,\mbQ)$ there exists an exact triangle
\begin{align*}
    \phi_f \mathcal{F}[-1]\to (X_0\to X)_*(X_0\to X)^*\mathcal{F} \to \psi_f\mathcal{F} \to \phi_f\mathcal{F}.
\end{align*}


\textbf{Properties}\label{propertiesofphi}

The nearby and vanishing cycles verify remarkable properties, making them manageable to work with. We recall some of them, that will be used. Consider  $X^{\prime}\xrightarrow{j} X\xrightarrow{f}\mbC$ the morphism of complex varieties $j$ followed by a function $f$.

\begin{itemize}
    \item Commutes with Verdier duality
\begin{align*}
    \phi_f\mbD \simeq \mbD \phi_f
\end{align*}
    \item Supported at the intersection of the critical locus  and the zero-fiber $X_0$: 
    \begin{align*}
        \Supp ~H^k(\phi_f \mbQ) \subseteq \Crit(f)\cap X_0.
    \end{align*}
    In our applications it will actually be supported at the critical locus $\mathrm{Crit(f)}\subset X_0$.

    \item Commutes with  pushforwards of closed embeddings:  for $j$ a closed embedding the natural transformation
    \begin{align*}
        \phi_fj_* \to j_*\phi_{fj}j^*j_*\simeq j_*\phi_{fj}
    \end{align*}
    is an equivalence. It is obtained by composing the natural transformation
    \begin{align*}
        \phi_f\to j_*\phi_{fj}j^*,
    \end{align*}
    with $j_*$.
      
    \item Commutes with smooth pullbacks: for $j$ smooth the natural transformation $j^*\phi_f \to j^*j_*\phi_{fj}j^*\simeq \phi_{fj}j^*$ is an equivalence.

    \item For $j$  an affine fibration a natural transformation
    \begin{align}\label{afffib}
        \phi_fj_!j^* \to j_!\phi_{fj}j^*
    \end{align}
    is an equivalence by [Corollary 2.4, \cite{D16}]
\end{itemize}
In general $\phi_f$ does not commute with pushforwards along open embeddings, as shows the following simple example. Consider $\mbC^*\xrightarrow{j}\mbC\xrightarrow{id}\mbC$ and consider a non-trivial local system $\mathcal{L}$ on $\mbC^*$. Then $0\neq(\phi_{f}j_*\mathcal{L})_0 \to  (j_*\phi_{fj}\mathcal{L})_0=0$ since $\Supp (j_*\phi_{fj}\mathcal{L}) \subseteq \Supp (\phi_{fj}\mathcal{L})$ is contained in the singular locus of the fiber at $0$, which is empty.

In what follows we often write $\phi_f$ instead of $\phi_f\mbQ[-1]$.

\subsection{Equivariant vanishing cycles}

We use the definition of the vanishing cycle cohomology of a stack from \cite{D16} to define $H_{c}(V/G,\phi_f)$ and $H(V/G,\phi_f)$ for $f:V \to \mbC$ a $G$-invariant function on representation $V$.

We assume $G$ is a complex algebraic subgroup of $\mathrm{GL}_n(\mbC)$ for some $n$. For $N\geq n$ denote by $\fr(n,N)$ the variety with a free $G$-action of tuples of $n$ linearly independent vectors in $\mbC^N$. The group $G$ acts on $V\times fr(n,N)$ by $g\cdot (v,h)=(g\cdot v,g^{-1}v)$, denote by $V_N:=V\times_G fr(n,N)$ the quotient variety. Denote by $f_N$ the induced function on $V_N$. One associates to it a vanishing cycle complex $\phi_{f_N}\mbQ_{V_N}$. The embedding $\mbC^N \to \mbC^{N+1}$, sending $(x_1,\dots,x_N)$ to $(x_1,\dots,x_N,0)$, induces closed embeddings $\fr(n,N)\to \fr(n,N+1)$ and $V_N\xrightarrow{i_N}V_{N+1}$ with $f_{N+1}i_N = f_N$.  
We have a morphism in $D_c^b(V_{N+1},\mbQ)$
\begin{align}\label{m1}
i_{N,!}\phi_{f_N}\mbQ_{V_N}[2 \dim V_N] \to \phi_{f_{N+1}}\mbQ_{V_{N+1}}[2 \dim V_{N+1}]
\end{align}
coming as a composition 
\begin{align*}
  &i_{N,!}\phi_{f_N}\mbQ_{V_N}[2 \dim V_N] \simeq i_{N,!}\phi_{f_N} \mbD\mbQ_{V_N}\simeq \\&\phi_{f_{N+1}}i_{N,!}\mbD\mbQ_{V_N}\to  \phi_{f_{N+1}} \mbD \mbQ_{V_{N+1}}\simeq \phi_{f_{N+1}}\mbQ_{V_{N+1}}[2\dim V_{N+1}]  
\end{align*}
where we used that for a smooth variety $V_N$ there is an isomorphism $\mbQ_{V_N}[2 \dim V_N] \to \mbD \mbQ_{V_N}$; $i_{N,*} \simeq i_{N,!}$ since $i_N$ is proper; an isomorphism $\phi_{f_{N+1}}i_{N,*} \simeq i_{N,*} \phi_{f_{N}}$ for a closed embedding $i_N$. 

Then, applying $(V_{N+1}\to \pt)_!$ to $(\ref{m1})$, we get a morphism in $D^b_c(\pt,\mbQ)$ 
\begin{align*}
    H_c(V_N,\phi_{f_N})[2 \dim V_N]\to H_c(V_{N+1},\phi_{f_{N+1}})[2 \dim V_{N+1}]. 
\end{align*}
One defines 
\begin{align*}
    H_c(V/G, \phi_f):=\colim_{N\to \infty} ~H_c(V_N,\phi_{f_N})[2 \dim \fr(n, N)].
\end{align*}
The definition is well defined  since $H_c(V_N,\phi_{f_N})[2 \dim \fr(n, N)]$ stabilizes in each cohomological degree.

The non-compactly supported version is defined by considering morphisms 
\begin{align*}
    \phi_{f_{N+1}}\mbQ_{V_{N+1}}\to \phi_{f_{N+1}}i_{N,*}\mbQ_{V_N} \simeq i_{N,*}\phi_{f_N}\mbQ_{V_N}
\end{align*}
and applying $(V_{N+1}\to \pt)_*$ to get linear maps
\begin{align*}
    H(V_{N+1}, \phi_{f_{N+1}})\to H(V_{N}, \phi_{f_{N}}). 
\end{align*}
One defines 
\begin{align*}
    H(V/G, \phi_f):=\lim_{N\to \infty}~H(V_{N}, \phi_{f_{N}}).
\end{align*}

In Section (\ref{TF}) we will use the $H_G(\pt,\mbQ)$-module structure on the vector space $H_{c,G}(X,\phi_f)^\vee$, constructed in [Section 2.6, \cite{D16}] for any complex variety $X$ with a $G$-invariant function $X\xrightarrow{f}\mbC$.

\subsection{Dimensional reduction}

Let $X$ be complex variety and denote by $\pi: X\times \mbC^n \to X$ the projection. Let $f: X\times \mbC^n \to \mbC$ be a $\mbC^*$-equivariant regular function, with weight $(0,1)$ on the source and weight 1 on the target. Then $f$ has the form $f=\sum_{i=1}^n f_ix_i$ where $x_1,\dots,x_n$ are coordinates on $\mbC^n$ and $f_i$ are functions on $X$. Denote the set of common zeroes by $Z=V(f_1,\dots,f_n)$. Then $Z$ is the subset of such points $x\in X$ that  $\pi^{-1}(x)\subset f^{-1}(0)$. Denote by $i:Z\to X$ the closed embedding.
The dimensional reduction states an isomorphism of functors \cite[Theorem A.1]{D16}
\begin{align*}
\pi_!\phi_f \pi^*[-1] \simeq \pi_!\pi^*i_*i^*
\end{align*}

Applying to the constant sheaf on $X$ and taking the derived global sections on both sides, one gets an isomorphism in compactly supported cohomology
\begin{align*}
H_c^{i-1}(X\times \mbC^n, \phi_f \mbQ) \simeq H_c^{i-2n}(Z,\mbQ)
\end{align*}

\subsection{Equivariant dimensional reduction}\label{eqdimred}

The dimensional reduction isomorphism extends to quotient stacks.  Assume also that $X$ is a complex algebraic $G$-variety for $G$ a complex algebraic group and 
$\pi: X\times \mbC^n\to X$ is a $G$-equivariant vector bundle over $X$. Assume
$f:X\times \mbC^n\to \mbC$ is a $G$-invariant function, again $\mbC^*$-equivariant with weight $(0,1)$ on the source and weight 1 on the target.  The dimensional reduction asserts an isomorphism in compactly supported cohomology \cite[Corollary A.9]{D16}
\begin{align*}
H_c^{i-1}(X\times\mbC^n/G, \phi_f \mbQ) \simeq H_c^{i-2n}(Z/G,\mbQ).
\end{align*}

Below we specialize the equivariant dimensional reduction isomorphism to situations that we will need.

\begin{example}\label{ex1dimred}
    Let $V$ be a $G$-representation, then $G$ acts on $X=T^*V\times \g$. Let $X$ be a vector space $T^*V\times\g$ equipped with an action of $G\times T_s$ for some auxiliary torus $T_s$(for example trivial) such that the $\mbC^*$-equivariant function $f:X \to \mbC$ given by $f(x,x^*,a) = \mu_V(x,x^*)(a) = \langle x^*, a.x \rangle$ is $G\times T_s$-invariant. Here we consider the $\mbC^*$-equivariance of $f$ with weights $(0,0,1)$. Consider a trivial $G\times T_s$-equivariant fibration $T^*V\times \g\to T^*V$. In this situation $Z=\mu_V^{-1}(0)$. Then we have
    \begin{align}\label{dimred1}
        H_c^{i-1}(T^*V\times\g/G\times T_s, \phi_f \mbQ) \simeq H_c^{i-2\dim \g}(\mu_V^{-1}(0)/G\times T_s,\mbQ).
    \end{align}
\end{example}

As a particular case of this example we consider the case of a quiver with potential \cite[Section A.3]{D16} . 

    Let $Q=(Q_0,Q_1)$ be a quiver and $\mbC Q$ be its path algebra. Denote by $Q^{op}$ the opposite quiver, with the same set of vertices as $Q$ but with all arrows reversed; adding the opposite arrow $a^*$ to each arrow $a\in Q_1$ of $Q$ one gets  $\bar{Q}$ the doubled quiver; adding a loop $\omega_i$ at each vertex $i$ of the doubled quiver one gets $\tilde{Q}$ the tripled quiver. 
    
    Fix $\mathbf{d}\in \mbN ^{\vert Q_0\vert}$ the dimension vector. We take the group $G$ to be $\mathrm{G}_{\mathbf{d}} := \prod_i \mathrm{GL}_{d_i}(\mbC)$, its Lie algebra $\gl_d=\oplus_i \gl_{d_i}$ is identified with its dual by the trace. Take the representation $V$ to be the representation space of the quiver
    \begin{align*}
        V=\bigoplus_{i\to j }\Hom(\mbC^{d_i}, \mbC^{d_j}).
    \end{align*}
    Here element $(g_i\in \mathrm{GL}_{d_i}(\mbC))_{i\in Q_0}$ acts on $(M_{ij}:\mbC^{d_i}\to \mbC^{d_j})_{i\to j \in Q_1}$ by conjugation 
    \begin{align*}
        M_{ij}\mapsto g_jM_{ij}g_i^{-1}.
    \end{align*}
    The dual representation $V^*$ is naturally identified with representation space of the opposite quiver, where all arrows are reversed
    \begin{align*}
       V^*= \bigoplus_{j\to i}\Hom(\mbC^{d_j}, \mbC^{d_i}).
    \end{align*}

  The moment map 
   \begin{align*}
     \mu_{\dd}: T^*V \to \gl_{\dd}^*\simeq \gl_{\dd} 
    \end{align*}
sends ($T^*V\simeq V\times V^*$)
\begin{align*}
    (x_a,x_{a^*})_{a:i\to j\in Q_1} \mapsto 
    \sum_{a:i\to j\in Q_1} [x_a,x_{a^*}]:= \sum_{a:i\to j\in Q_1} (x_ax_{a^*},-x_{a^*}x_a)
\end{align*}
where $x_ax_{a^*}\in \gl_{d_j}$ and $x_{a^*}x_a\in \gl_{d_i}$. 

The stack $\mu_{\dd}^{-1}(0)/\mathrm{G}_{\dd}$ is is isomorphic to the stack $\mathfrak{M}_{\Pi_Q,\dd}$ of $\dd$-dimensional represenations of a \textit{preprojective algebra} of $Q$
\begin{align}\label{prep}
    \Pi_Q := \mbC \bar{Q}/\sum_{a\in Q_1} [a,{a^*}]
\end{align}

 Let $W \in \mbC Q/[\mbC Q, \mbC Q]$ be the formal linear combination of cycles in the  quiver(here one takes the quotient of a vector space by the vector subspace spanned by commutators of elements in $\mbC Q$). Given $W=\sum_ia_i C_i$, the linear combination of cycles $C_i$ in $Q$, and a dimension vector $\mathbf{d}$ one defines a function 
 \[\begin{tikzcd}
	{\Tr_{\mathbf{d}}(W): V \to \mbC} & {\Tr_{\mathbf{d}}(W)(\rho) = \sum_i a_i \Tr(\rho(C_i)).} & {}
\end{tikzcd}\] This function is $\mathrm{G}_{\mathbf{d}}$-invariant because of invariance of the trace under conjugation, and so descends to a function on a quotient.

In particular, given a tripled quiver $\tilde{Q}$ and the canonical qubic potential $$\tilde{W}=\sum_{i\in Q_0}\omega_{i} \sum_{a\in Q_1}[a,a^*] \in \mbC \tilde{Q}/[\mbC \tilde{Q}, \mbC \tilde{Q}],$$ one defines a function $\Tr_{\mathbf{d}}(\tilde{W}): T^*V\times \g/G \to \mbC$. In this situation $$Z_{\mathbf{d}}=\{(x_a,x_a^*)_{a\in Q_1}\in T^*V: \sum_{a\in Q_1}[x_a,x_a^*]=0\}$$ is the commuting variety. One the other hand, the equivariant vanishing cycle complex $\phi_{\Tr_{\mathbf{d}}(\tilde{W})}\mbQ$ is supported on the critical locus stack
\begin{align*}
\mathrm{Crit}(\Tr_{\mathbf{d}}(\tilde{W}))/\mathrm{G}_{\dd}\simeq \mathfrak{Jac}_{(\tilde{Q},\tilde{W}),\dd}
\end{align*}
which is isomorphic to the stack $\mathfrak{Jac}_{(\tilde{Q},\tilde{W}),\dd}$ of $\dd$-dimensional representations of \textit{Jacobi algebra}
\begin{align*}
J_{(\tilde{Q},\tilde{W})} = \mbC \tilde{Q}/(\partial_a \tilde{W}: a\in \tilde{Q}_1).
\end{align*}

The pair $(\tilde{Q},\tilde{W})$ admits a \textit{cut} given by grading $\nu: \tilde{Q}\to \mbZ_{\geq0}$ 
\[\begin{tikzcd}
	{\nu(a)=0} & {\nu(a^*)=0} & {\nu(\omega_i)=1.}
\end{tikzcd}\]

Then for the pair $(\tilde{Q},\tilde{W})$ the dimensional reduction isomorphism (\ref{dimred1}) specializes to 
\begin{align*}
    H^{i-1}_c(\mathfrak{Jac}_{(\tilde{Q},\tilde{W}),\dd}, \phi_{\Tr_{\mathbf{d}}(\tilde{W})}\mbQ) \simeq H_c^{i-2\dd^2}(\mathfrak{M}_{\Pi_Q,\dd},\mbQ).
\end{align*}
In particular, for $Q$ a one-loop quiver the prepojective algebra is $\mbC[x,y]$, and the Jacobi algebra for the potential $\tilde{W} = a[b,c]$ is $\mbC[x,y,z]$. The stacks of $d$-dimensional representations of these algebras are the stacks of length $d$ coherent sheaves on $\mbA^2$ and $\mbA^3$, respectively
\begin{align*}
&\mathfrak{M}_{\mbC[x,y],d}\simeq\mathrm{Coh}_d(\mbA^2) 
& \mathfrak{Jac}_{(Q,W),d}\simeq \mathrm{Coh}_d(\mbA^3).
\end{align*}

In this situation the theorem states an isomorphism
\begin{align*}
    H^{i-1}_c(\mathrm{Coh}_d(\mbA^3), \phi_{\Tr_{d}(a[b,c])}\mbQ) \simeq H_c^{i-2d^2}(\mathrm{Coh}_d(\mbA^2),\mbQ).
\end{align*}

\begin{example}\label{ex2dimred}
    Let $X$ and $f$ be as in Example \ref{ex1dimred}.
    Consider now a a trivial $G$-equivariant fibration to another base $T^*V\times \g\to V\times \g$. Assume now the $\mbC^*$ acts on $V\times V^* \times \g$ with weights $(0,1,0)$. We consider the same  function $f$ but with new $\mbC^*$-equivariance. Now $Z$ is the set $\{(x,a)\in V\times \g : a.x = 0\}$. 
    Then we have
    \begin{align}\label{dimred2}
        H_c^{i-1}(T^*V\times\g/G\times T_s, \phi_f \mbQ) \simeq H_c^{i-2 \dim V}(\{(x,a)\in V\times \g : a.x = 0\}/G\times T_s,\mbQ).
    \end{align}
\end{example}

\section{Hall induction}\label{HallInd}

\subsection{Dynamical method}
Let $G$ be a reductive group and $V$ be its representation. Let $T\subseteq G$ be a maximal torus. Denote by $X_*(T)=\Hom_\mbZ(\mbG_m,T)$ the lattice of cocharacters.

First, we recall the dynamical method of assigning a parabolic and Levi subgroups of $G$ to a cocharacter $\lambda: \mbG_m \to T$. These subgroups come together with naturally associated representations, see \cite{Milne}.

Namely, let $G$ be a reductive group and $T\subseteq G$ be a maximal torus. Let $\lambda: \mbG_m \to T$ be a cocharacter. To it one associates a parabolic $P_\lambda$, Levi $L_\lambda$ and unipotent $U_\lambda$ subgroups
\begin{align*}
   & P_{\lambda} = \{g\in G : \lim_{t\to 0} \lambda(t)g\lambda(t)^{-1} \text{ exists }\},\\
    & L_{\lambda} = P^{\lambda} \cap P^{\lambda^{-1}} = \{g\in G : \lambda(t)g\lambda(t)^{-1} = g ~\forall t\}, \\
    & U_{\lambda} = \{g\in G: \lim_{t\to 0} \lambda(t)g\lambda(t)^{-1} \text{ exists and equals to 1}\}
\end{align*}

The parabolic $P_\lambda$ naturally acts on $V^{\lambda\geq 0}$ and Levi $L_\lambda$ acts on $V^\lambda$, where  
\begin{align*}
&V^{\lambda\geq 0}=\{v\in V : \lim_{t\to 0} \lambda.v \text{ exists}\},\\
&V^{\lambda} = \{v\in V: \lambda(t).v = v ~\forall t\}.
\end{align*}

Denote by $\p_{\lambda},~ \el_{\lambda},~ \U_{\lambda}$ their Lie algebras. 
The condition for the limit $\lim_{t\to0}\lambda(t)g\lambda(t)^{-1}$ to exists means the following: the morphism 
\begin{align*}
\mbG_m \to G, ~t\mapsto \lambda(t)g\lambda(t)^{-1} 
\end{align*}
should extend to a morphism from $\mbA^1$.
Similarly, for the limit $\lim_{t\to 0}\lambda.v$ to exists means the morphism
\begin{align*}
    \mbG_m\to V, ~t\mapsto \lambda(t).v
\end{align*}
should extend to a morphism from $\mbA^1$.

It is clear from the definition that the maximal torus $T$ is subgroup of both $P_{\lambda}$ and $L_{\lambda}$.

\begin{theorem}\cite[Chapter 21, Section (i)]{Milne}
    Let $\lambda$ be a cocharacter of $G$. Then $P_{\lambda}$ is a parabolic subgroup of $G$, and every parabolic subgroup of $G$ is of this form
\end{theorem}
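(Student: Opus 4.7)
My plan is to reduce the statement to the root-theoretic description of $P_\lambda$ and $L_\lambda$. First I would fix a maximal torus $T\subset G$ containing the image of $\lambda$ and use the root space decomposition $\g = \T \oplus \bigoplus_{\alpha\in \Phi}\g_\alpha$. Conjugation by $\lambda(t)$ acts on $\g_\alpha$ by the scalar $t^{\langle \alpha,\lambda\rangle}$, so in terms of Lie algebras the condition that $\lim_{t\to 0}\lambda(t)g\lambda(t)^{-1}$ exists translates on the infinitesimal level into
\begin{align*}
\p_\lambda = \T \oplus \bigoplus_{\langle\alpha,\lambda\rangle\geq 0}\g_\alpha,\qquad \el_\lambda = \T\oplus\bigoplus_{\langle\alpha,\lambda\rangle=0}\g_\alpha,\qquad \U_\lambda=\bigoplus_{\langle\alpha,\lambda\rangle>0}\g_\alpha.
\end{align*}
After choosing a system of positive roots refining the partial order given by $\langle-,\lambda\rangle$, the subspace $\p_\lambda$ is visibly a standard parabolic subalgebra, i.e. contains a Borel and is closed under the bracket.

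Next I would upgrade this Lie-algebra picture to the group level. The set $P_\lambda$ is closed (the limit condition is a closed condition on $G$, since it is equivalent to the orbit map $\mbG_m\to G$ extending to $\mbA^1$), and is clearly stable under multiplication because the limit of a product is the product of the limits. Stability under inversion is the one nontrivial point: I would argue it by factoring through the Bruhat / Levi decomposition, writing any element whose conjugate has a limit as the product of elements in root subgroups $U_\alpha$ with $\langle\alpha,\lambda\rangle\geq 0$ and an element of $L_\lambda$, and then observing that the inverse has the same form. Since $P_\lambda$ is thus a closed subgroup whose Lie algebra is a standard parabolic subalgebra, and it already contains the Borel determined by $\lambda$, it is a parabolic subgroup of $G$.

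For the converse, given a parabolic subgroup $P\subset G$, I would fix $T\subset P$ and a Borel $B\subset P$; then $P$ is determined by a subset $\Phi_P\subset \Phi$ of roots, consisting of all positive roots together with the negatives of some subset $S$ of simple roots (the Levi part). I want a cocharacter $\lambda\in X_*(T)$ with $\langle\alpha,\lambda\rangle\geq 0$ for $\alpha\in\Phi_P$ and $\langle\alpha,\lambda\rangle<0$ otherwise; equivalently, $\langle\alpha_i,\lambda\rangle=0$ for $\alpha_i\in S$ and $\langle\alpha_i,\lambda\rangle>0$ for the remaining simple roots. Such $\lambda$ exists over $\mbQ$ because the simple coroots form a basis dual to the simple roots; multiplying by a suitable positive integer yields an element of the cocharacter lattice $X_*(T)$. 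By the root-space formula above, $P_\lambda=P$.

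The main obstacle I anticipate is the stability of $P_\lambda$ under inversion, since unlike closure under multiplication it is not formal from the limit condition; morally it is equivalent to the fact that the standard parabolic subgroups are closed under the group operation, which in turn rests on the Bruhat-type decomposition $P_\lambda = U_\lambda \rtimes L_\lambda$ together with $L_\lambda=Z_G(\lambda)$ being reductive. Once this structural decomposition is in place, both directions of the correspondence between cocharacters and parabolics drop out of the weight decomposition.
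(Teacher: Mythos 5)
The paper offers no proof of this statement: it is quoted verbatim from Milne \cite[Chapter 21, Section (i)]{Milne}, so there is no in-text argument to compare yours against. Your sketch reproduces the standard proof found there and is essentially correct: the root-space computation of $\p_\lambda$, $\el_\lambda$, $\U_\lambda$ from the weights of $\mathrm{Ad}\circ\lambda$, the observation that $P_\lambda$ contains the Borel attached to a system of positive roots refining $\langle -,\lambda\rangle\geq 0$ (hence is parabolic once it is known to be a closed subgroup), and the construction, for a standard parabolic $P_S$, of a cocharacter pairing to zero with the simple roots in $S$ and positively with the rest, all go through.

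Two calibration remarks. First, you single out closure of $P_\lambda$ under inversion as ``the one nontrivial point,'' but with the definition used in the paper (the orbit map $\mbG_m\to G$, $t\mapsto\lambda(t)g\lambda(t)^{-1}$, extends to a morphism $\mbA^1\to G$) it is formal: the extended morphism lands in $G$, so its value at $0$ is invertible, and composing the extension with the inversion morphism of $G$ extends $t\mapsto\lambda(t)g^{-1}\lambda(t)^{-1}$. Your proposed detour through $P_\lambda=U_\lambda\rtimes L_\lambda$ is therefore unnecessary for this step, and would be delicate to set up at that stage, since that decomposition is normally derived only after one knows $P_\lambda$ is a subgroup; the genuinely nontrivial content is rather that $P_\lambda$ is a smooth closed subgroup whose Lie algebra is $\p_\lambda$. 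Second, in the converse direction it is the fundamental coweights, not the simple coroots, that are dual to the simple roots; what your argument actually needs is only the linear independence of the simple roots in $X^*(T)\otimes\mbQ$, which guarantees a rational cocharacter with the prescribed pairings that can be cleared of denominators. Neither point breaks the argument.
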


To define a parabolic induction, the following order on cocharacters was considered in \cite{H25} 
\begin{align}\label{ord}
    \lambda \preceq \nu \iff \begin{cases}
        V^{\lambda} \subseteq V^{\nu},\\
        \el_{\lambda} \subseteq \el_{\nu}\\
    \end{cases}
\end{align}

It defines an equivalence relation on a set $X_*(T)$
\begin{align*}
    \lambda \sim \nu \iff \lambda\preceq \nu \text{ and }\lambda \succeq \nu
\end{align*}

 \begin{example}
     Consider $V=\gl_4$ the adjoint representation of $GL_4$ and $T\subset GL_4$ the standard maximal torus. Let $\lambda =(1,2,2,3), \nu = (2,2,2,1) \in X_*(T)$ two cocharacters.  
     We have 
     \begin{align*}
     P_{\lambda}=\begin{pmatrix}
    * &  &&\\
    * & * &*&\\
    *&*&*&\\
    *&*&*&*
\end{pmatrix},&& L_{\lambda}=\begin{pmatrix}
    * &  &&\\
     & * &*&\\
    &*&*&\\
    &&&*
\end{pmatrix},&&
U_{\lambda}=\begin{pmatrix}
    1 &  &&\\
    * & 1 &&\\
    *&&1&\\
    *&*&*&1
\end{pmatrix}\\
        P_{\nu}=\begin{pmatrix}
    * & * &*&*\\
    * & * &*&*\\
    *&*&*&*\\
    &&&*
\end{pmatrix},&&
        L_{\nu}=\begin{pmatrix}
    * & * &*&\\
    * & * &*&\\
    *&*&*&\\
    &&&*
\end{pmatrix},&& U_{\nu}=\begin{pmatrix}
    1 &  &&*\\
     & 1 &&*\\
    &&1&*\\
    &&&1
\end{pmatrix}\\
\end{align*}
     acting on their adjoint representations
     $V^{\lambda \geq 0} = \p_{\lambda}$, etc.  We see $V^{\lambda}=\el_{\lambda}\subseteq V^{\nu}=\el_{\nu}$, so $\lambda \preceq \nu$.
 \end{example}

One considers the moment maps $\mu_V, \mu_{\geq \lambda}, \mu_{\lambda}$ with their zero-levels $\mu_V^{-1}(0), \mu_{\lambda\geq 0}^{-1}(0), \mu_{\lambda}^{-1}(0)$ invariant under the action of $G, P_{\lambda}, L_{\lambda}$, respectively.

\subsection{KS critical Hall induction}
\label{KScrit}

For any representation $V$ of $G$, a function $f$ on $V/G$ and $\lambda\in X_*(T)$, denote by 
\begin{align}\label{comp}
\mathcal{H}_{V,f,\lambda}:=H_{c,L_\lambda}(V^\lambda, \phi_{f_\lambda})^\vee[-\dim~V^\lambda/L_\lambda]
\end{align}
the shifted dual of the compactly supported equivariant cohomology.

\textbf{Assumption}
We will assume that for any $\lambda\in X_*(T)$ the following holds
\begin{align}\label{ass}
\dim ~V^\lambda/L_\lambda - \dim ~V/G = 2(\dim~V^{\lambda\geq 0}/P_\lambda - \dim~V/G). 
\end{align}
This assumption will be used for induction to preserve cohomological degree.

For $\lambda, \nu\in X_*(T)$ such that $\lambda \preceq \nu$ we define in steps the induction map
\begin{align}\label{KSHall}
    \mathcal{H}_{V,f,\lambda}\to \mathcal{H}_{V,f,\nu}.
\end{align}
This is the variant of Kontsevich and Soibelman CoHA multiplication \cite{KS11} in case of a quiver with potential. We follow closely the exposition in \cite{D16}.

\textit{Step 1: pull-back along} $(V^\nu)^{\lambda\geq 0}/L_\lambda \to V^\lambda/L_\lambda $

Recall we have the induction diagram of spaces together with algebraic groups they act upon
\[\begin{tikzcd}
	& {(V^\nu)^{\lambda \geq 0}} &&& {P_{\lambda,\nu}} \\
	{V^\lambda} && {V^\nu} & {L_\lambda} && {L_\nu}
	\arrow["{{{\pi_\lambda^\nu}}}"', from=1-2, to=2-1]
	\arrow[hook, from=1-2, to=2-3]
	\arrow[from=1-5, to=2-4]
	\arrow[hook, from=1-5, to=2-6]
\end{tikzcd}\]
with $P_{\lambda,\nu}$-equivariant affine fibration $\pi_\lambda^\nu$ of relative dimension $\dim \pi_\lambda^\nu := \dim (V^\nu)^{\lambda \geq 0} - \dim V^\lambda = \sum_{\alpha\in X^*(T):V^\nu_\alpha\neq 0, \langle\lambda, \alpha\rangle>0}\dim V_\alpha^\nu$, and the right $P_{\lambda,\nu}$-equivariant closed embedding. Here $P_{\lambda,\nu}:=P_\lambda\cap L_\nu$ is a parabolic subgroup of $L_\nu$.

We have the induced affine fibration $((V^\nu)^{\lambda\geq 0},L_\lambda)_N\xrightarrow{p_N} (V^\lambda,L_\lambda)_N$, again of relative dimension $\dim \pi_\lambda^\nu$. It induces an isomorphism $\mbQ_{(V^\lambda,L_\lambda)_N}\to p_{N,*}\mbQ_{((V^\nu)^{\lambda\geq 0},L_\lambda)_N}$ and an isomorphism $\phi_{f_{N,\lambda}}(\mbQ_{(V^\lambda,L_\lambda)_N}\to p_{N,*}\mbQ_{((V^\nu)^{\lambda\geq 0},L_\lambda)_N})$. Applying Verdier duality and using that vanishing cycle commutes with Verdier duality we get an isomorphism $\phi_{f_{N,\lambda}}(p_{N,!}\mbD\mbQ_{((V^\nu)^{\lambda\geq 0},L_\lambda)_N}\to \mbD\mbQ_{(V^\lambda,L_\lambda)_N})$. Using that for $X$ a smooth complex variety $\mbD \mbQ_X \simeq\mbQ_X[2 \dim X]$, we get an isomorphism $\phi_{f_{N,\lambda}}(p_{N,!}\mbQ_{((V^\nu)^{\lambda\geq 0},L_\lambda)_N} \to\mbQ_{(V^\lambda,L_\lambda)_N} [-2 \dim \pi_\lambda^\nu])$.
Using that $\phi_{f_{N,\lambda}}p_{N,!}\mbQ \simeq p_{N,!}\phi_{f_{N,\lambda}\circ p_N}\mbQ$ by (\ref{afffib}),  we have an isomorphism 
\begin{align*}
p_{N,!}\phi_{f_{N,\lambda}\circ p_N}\mbQ_{((V^\nu)^{\lambda \geq 0},L_\lambda)_N}\to \phi_{f_{N,\lambda}}\mbQ_{(V^\lambda,L_\lambda)_N}[-2 \dim \pi_\lambda^\nu].
\end{align*}
Shifting by $[\dim V^\lambda/L_\lambda]$
and taking compactly supported cohomology, we get 
\begin{align*}
    H_c(((V^\nu)^{\lambda\geq 0},L_\lambda)_N, \phi_{f_{N,\lambda}\circ p_N})[\dim V^\lambda/ L_\lambda] \to H_c((V^\lambda,L_\lambda)_N, \phi_{f_{N,\lambda}})[\dim V^\lambda/ L_\lambda - 2\dim \pi_\lambda^\nu].
\end{align*}
Taking colimit and the dual, we arrive to an isomorphism
\begin{align*}
    \alpha:H_{c,L_\lambda}(V^\lambda, \phi_{f_\lambda})^\vee[-\dim V^\lambda/ L_\lambda] \to H_{c, L_\lambda}((V^\nu)^{\lambda \geq 0}, \phi_{f_\lambda^\nu})^\vee[-\dim V^\lambda/L_\lambda -2\dim \pi_\lambda^\nu]
\end{align*}
\textit{Step 2: pullback along $(V^\nu)^{\lambda\geq 0}/L_\lambda \to (V^\nu)^{\lambda\geq 0}/P_{\lambda,\nu}$}
There is an affine fibration   
\begin{align*}
    ((V^\nu)^{\lambda\geq 0}, L_\lambda)_N\xrightarrow{q_N} ((V^\nu)^{\lambda\geq 0}, P_{\lambda,\nu})_N
\end{align*}
of relative dimension $\dim q_\lambda^\nu := \dim \p_{\lambda,\nu}-\dim \el_\lambda = \sum_{\alpha\in X^*(T):\el_{\nu,\alpha}\neq 0, \langle\lambda, \alpha\rangle>0} \dim \el_{\nu, \alpha}$. 
We have a similar isomorphism
\begin{align*}
q_{N,!}\phi_{f_{\lambda, N}^\nu\circ q_N}\mbQ_{((V^\nu)^{\lambda \geq 0},L_\lambda)_N}\to \phi_{f_{\lambda,N}^\nu}\mbQ_{((V^\nu)^{\lambda \geq 0},P_{\lambda,\nu})_N}[-2 \dim q_\lambda^\nu].
\end{align*}
Taking colimits of the shifted duals, we arrive to an isomorphism
\begin{align*}
    \beta: H_{c,P_{\lambda,\nu}}((V^\nu)^{\lambda \geq 0}, \phi_{f_\lambda^\nu})^\vee[-\dim V^\lambda/L_\lambda-2\dim\pi_\lambda^\nu + 2\dim q_\lambda^\nu] \to \\\to H_{c,L_\lambda}((V^\nu)^{\lambda \geq 0}, \phi_{f_\lambda^\nu})^\vee[-\dim V^\lambda/L_\lambda-2\dim \pi^\nu_\lambda] 
\end{align*}
Note that $-\dim V^\lambda/L_\lambda-2\dim\pi_\lambda^\nu + 2\dim q_\lambda^\nu=\dim V^\lambda/L_\lambda -2\dim (V^\nu)^{\lambda\geq0}/P_{\lambda,\nu} = -\dim V^\nu/L_\nu$, where in the last equality we used the assumption (\ref{ass}).

Step 3: \textit{restrict vanishing cycles} 

The closed embedding $((V^\nu)^{\lambda \geq 0}, P_{\lambda,\nu})_N\xrightarrow{i_N} (V^\nu,P_{\lambda,\nu})_N \xrightarrow{f_{N,\nu}}\mbC$ induces a map 
$i_{N}^*\phi_{f_{N,\nu}}\to \phi_{f_{N,\nu}\circ i_N}i_N^*$, and the map $H_{c,P_{\lambda,\nu}}((V^\nu)^{\lambda\geq 0}, ((V^\nu)^{\lambda\geq 0}\to V^\nu)^*\phi_{f_\nu})\to H_{c,P_{\lambda,\nu}}((V^\nu)^{\lambda\geq0}, \phi_{f_\lambda^\nu})$. Taking shifted duals one gets
\begin{align*}
    \epsilon:H_{c,P_{\lambda,\nu}}((V^\nu)^{\lambda\geq0}, \phi_{f_\lambda^\nu})^\vee[-\dim V^\nu/L_\nu]\to H_{c,P_{\lambda,\nu}}((V^\nu)^{\lambda\geq 0}, ((V^\nu)^{\lambda\geq 0}\to V^\nu)^*\phi_{f_\nu})^\vee[-\dim V^\nu/L_\nu]
\end{align*}

Step 4: \textit{extend vanishing cycles}

The closed embedding $((V^\nu)^{\lambda \geq 0}, P_{\lambda,\nu})_N\xrightarrow{i_N} (V^\nu,P_{\lambda,\nu})_N \xrightarrow{f_{N,\nu}}\mbC$ induces a map 
$\phi_{f_{\nu,N}}\to i_{N,*}i_N^* \phi_{f_{\nu,N}}$ and $H_{c,P_{\lambda,\nu}}(V^\nu,\phi_{f_\nu})\to H_{c,P_{\lambda,\nu}}((V^\nu)^{\lambda\geq 0},((V^\nu)^{\lambda \geq 0} \to V^\nu)^*\phi_{f_\nu})$ and taking shifted duals
\begin{align*}
    \zeta: H_{c,P_{\lambda,\nu}}((V^\nu)^{\lambda\geq 0},((V^\nu)^{\lambda \geq 0} \to V^\nu)^*\phi_{f_\nu})^\vee[-\dim V^\nu/L_\nu]\to H_{c,P_{\lambda,\nu}}(V^\nu,\phi_{f_\nu})^\vee[-\dim V^\nu/L_\nu]
\end{align*}

Step 5: \textit{push-forward along} 
$V^\nu/P_{\lambda,\nu}\to V^\nu/L_\nu$

The proper map $(V_\nu, P_{\lambda,\nu})_N\xrightarrow{pr_N} (V_\nu, L_\nu)_N$  induces a map $\phi_{f_{\nu,N}} \to pr_{N,!}\phi_{f_{\nu,N}} pr_N^*$. We get $H_{c,L_\nu}(V^\nu, \phi_{f_\nu})\to H_{c,P_{\lambda,\nu}}(V^\nu,\phi_{f_\nu})$ and its shifted dual
\begin{align*}
    \delta: H_{c,P_{\lambda,\nu}}(V^\nu,\phi_{f_\nu})^\vee[-\dim V^\nu/L_\nu]\to H_{c,L_\nu}(V^\nu,\phi_{f_\nu})^\vee[-\dim V^\nu/L_\nu].
\end{align*}

\textbf{We define the induction map} 
\begin{align*}
    \mathcal{H}_{V,f,\lambda}\xrightarrow{Ind} \mathcal{H}_{V,f,\nu}
\end{align*}
as $\delta\zeta\epsilon\beta^{-1}\alpha$. 

Note that the composition $\zeta\epsilon$ is a push-forward along the proper map $(V^\nu)^{\lambda\geq 0}/P_{\lambda,\nu} \to V^\nu/P_{\lambda,\nu}$.

When $f=0$, $\phi_{f}\mbQ[-1]=\mbQ$ and $\mathcal{H}_{V,f,\lambda}$ is the space of symmetric polynomials $\mbQ[\T]^{W_\lambda}$ with shifted degrees. The induction map is dual to the shuffle map from \cite{H25}
\begin{align}\label{shuf}
    &\mbQ[\T]^{W_\lambda}\xrightarrow{Ind^\lambda_\nu} \mbQ[\T]^{W_\nu}\\
    &f \mapsto \sum_{\sigma\in W_\nu/W_\nu \cap W_{\lambda}}\sigma.(f k_{\lambda,\nu})
\end{align}
where 
\begin{align*}
    k_{\lambda,\nu} = \dfrac{\prod_{\substack{\alpha\in X^*(T)\\ V_\alpha^\nu\neq0, \langle\lambda,\alpha\rangle>0} }\alpha^{\dim V^\nu_\alpha}}{\prod_{\substack{\alpha\in X^*(T)\\\el_{\nu, \alpha}\neq0, \langle\lambda,\alpha\rangle >0}}\alpha ^{\dim \el_{\nu,\alpha}}}
\end{align*}

\begin{remark}
Note that the degrees of the numerator and denominator are $-2\dim \pi_\lambda^\nu$ and $-2 \dim q_\lambda^\nu$, respectively, if we put the degrees of $\alpha\in X^*(T)$ to be $-2$. Note that on one hand the induction map changes the cohomological degree by $-\dim V^\nu/L_\nu + \dim V^\lambda/L_\lambda$ and in this sense preserves it. On the other hand, multiplication by $k_{\lambda,\nu}$ changes the degree by $-2\dim \pi_\lambda^\nu + 2 \dim q_\lambda^\nu = -\dim V^\nu/L_\nu + \dim V^\lambda/L_\lambda$, making no contradictions. 
\end{remark}

\begin{remark}
It is likely that $Ind$ has the same image for $\lambda\sim\lambda^\prime$ but we do not prove it. 
\end{remark}

\begin{example}
    Consider $V=\g$ the adjoint representation, any $\lambda$ and $\nu = 1$, so $L_\lambda \subseteq L_\nu=G$. Then $k_{\lambda,1} =1$ and $Ind^\lambda$ is an operation of symmetrization. 
\end{example}
\begin{example}
    Consider $V=\mbC^3$ the standard representation of $\mathrm{GL}_3(\mbC)$, and choose $\lambda(t)=(t^2,t,t)$ and $\nu(t)=(1,1,1)$. Then 
    \[\begin{tikzcd}
	{\mbQ[\T]^{W_{\lambda}}} & {\mbQ[\T]^{W}} \\
	{\mbQ[z_1,z_2]^{S_2}\otimes \mbQ[z]} & {\mbQ[z_1,z_2,z_3]^{S_3}}
	\arrow["{Ind_\lambda}", from=1-1, to=1-2]
	\arrow["\simeq"', no head, from=1-1, to=2-1]
	\arrow["\simeq"', no head, from=1-2, to=2-2]
	\arrow[from=2-1, to=2-2]
\end{tikzcd}\]
is the shuffle product
\begin{align*}
    (f*g)(z_1,z_2,z_3) = \sum_{\sigma\in S_3/S_2}f(z_{\sigma(1)},z_{\sigma(2)})g(z_{\sigma(3)}) \dfrac{z_{\sigma(1)}z_{\sigma(2)}z_{\sigma(3)}}{(z_{\sigma(1)}-z_{\sigma(2)})(z_{\sigma(1)}-z_{\sigma(3)})}
\end{align*}
\end{example}

\subsection{Induction for cotangent representations}\label{subsec:indcotrep}

Let $V$ be a finite dimensional representation of a complex reductive group $G$ with Lie algebra $\g$. Let $\mu_V:T^*V\to \g^*$ be a $G$-equivariant moment map. Consider a $G$-invariant function $f:T^*V\times \g\to \mbC, (x,x^*,\xi)\mapsto \mu_V(x,x^*)(\xi)$. Its critical locus is $\mathrm{Crit}(f)=\{(x,x^*,\xi): \mu_V(x,x^*)=0, \xi\in \g_x \cap \g_{x^{*}}\}\subset \mu_V^{-1}(0)\times \g\subset f^{-1}(0)$, where $\g_x$ is the Lie algebra of the stabilizer of $x\in V$ and $\g_{x^{*}}$ is the Lie algebra of the stabilizer of $x^*\in V^*$. We have $\phi_f$ is supported at $\Crit(f)$.

  For $\lambda\in X_*(T)$
denote by $d_\lambda=\dim ~T^*V^\lambda$ and $l_\lambda=\dim~\el_\lambda$. We denote by $\phi_\lambda:=\phi_{f_\lambda}$ for $f_\lambda$ the restriction of $f$ to the $\lambda$-fixed locus $T^*V^\lambda\times\el_\lambda$.

Recall the order relation (\ref{ord}): $\lambda\preceq \nu \iff$  $V^\lambda \subseteq V^\nu$ and $\el_\lambda\subseteq \el_\nu$ and the last conditions is equivalent to $L_\lambda \subseteq L_\nu$ since we work over an algebraically closed field. Whenever $\lambda \preceq \nu$, we have a morphism in $D^b_c(\pt,\mbQ)$ 
\begin{align}\label{indcot}
H_{L_\lambda}^\BM(\mu_\lambda^{-1}(0),\mbQ)[d_\lambda +2l_\lambda]\xrightarrow{Ind_\nu^\lambda}H_{L_\nu}^\BM(\mu_\nu^{-1}(0),\mbQ)[d_\nu +2l_\nu],
\end{align}
called the \textit{Hall induction} from $\lambda$ to $\nu$. We define it as (\text{dim.red.})$\circ Ind^\lambda_\nu\circ (\text{dim.red.})^{-1}$   from the diagram 
\[\begin{tikzcd}
	{ H^\BM_{d_\lambda+2l_\lambda-i,  L_{\lambda}}(\mu_{\lambda}^{-1}(0),\mbQ)} & {H^\BM_{L_{\nu}}(\mu_{\nu}^{-1}(0),\mbQ)} \\
	{ H^{i-d_\lambda-2l_\lambda}_{c,  L_{\lambda}}(\mu_{\lambda}^{-1}(0),\mbQ)^\vee} & { H^{i-d_\nu-2l_\nu}_{c,  L_{\nu}}(\mu_{\nu}^{-1}(0),\mbQ)^\vee} \\
	{H_{c,L_\lambda}^{i-d_\lambda}(T^*V^\lambda\times\el_\lambda, \phi_\lambda)^\vee} & {H_{c,L_\nu}^{i-d_\nu}(T^*V^\nu\times\el_\nu, \phi_\nu)^\vee}
	\arrow["{Ind_\nu^\lambda}", from=1-1, to=1-2]
	\arrow["{=}"', no head, from=1-1, to=2-1]
	\arrow["{=}"', no head, from=1-2, to=2-2]
	\arrow["{\text{dim.red.}\simeq}"', no head, from=2-1, to=3-1]
	\arrow["{\text{dim.red.}\simeq}"', no head, from=2-2, to=3-2]
	\arrow["{Ind_\nu^\lambda}", from=3-1, to=3-2]
\end{tikzcd}\]
where \text{dim.red} stands for dimensional reduction isomorphism from Example (\ref{eqdimred}). The map below is defined as a special case of the KS critical Hall induction (\ref{KScrit})
\begin{align}\label{T*VKS}
\mathcal{H}_{T^*V\times \g,f,\lambda}\xrightarrow{Ind^\lambda_\nu}\mathcal{H}_{T^*V\times \g,f,\nu}
\end{align}
for $f(x,x^*,\xi)=\mu_V(x,x^*)(\xi)$.

\subsection{Associativity}
 \begin{lemma}
 For cocharacters $\lambda, \mu, \nu \in X_*(T)$ such that $\lambda \preceq \mu \preceq \nu$ we have
    \begin{align*}
    Ind^{\mu}_{\nu} \circ Ind^{\lambda}_{\mu} = Ind^{\lambda}_{\nu} 
    \end{align*}   
\end{lemma}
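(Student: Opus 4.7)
The plan has two layers: first, reduce to associativity of the KS critical Hall induction of Section \ref{KScrit}; second, prove the latter via a triple parabolic correspondence.

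By the definition (\ref{indcot}), $Ind_\nu^\lambda$ for cotangent representations is the conjugation of the KS critical Hall induction (\ref{KSHall}) applied to $f(x,x^*,\xi)=\mu_V(x,x^*)(\xi)$ by the dimensional reduction isomorphism of Example \ref{ex1dimred}. The dimensional reduction is natural in the group: the isomorphism for $L_\lambda$ factors through frame approximations $\fr(-,N)$ which include into those for $L_\mu$ and $L_\nu$ along $L_\lambda\subseteq L_\mu\subseteq L_\nu$, and the resulting squares commute. Hence the assertion reduces to the identity $Ind^\mu_\nu(\text{KS})\circ Ind^\lambda_\mu(\text{KS})=Ind^\lambda_\nu(\text{KS})$ on $\mathcal{H}_{T^*V\times\g,f,\bullet}$.

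For the KS critical Hall induction, I rewrite the five steps $\alpha,\beta,\epsilon,\zeta,\delta$ of Section \ref{KScrit} as a single pull--push through the stack $Z_{\lambda,\nu}:=(V^\nu)^{\lambda\geq 0}/P_{\lambda,\nu}$, with smooth left leg $p_{\lambda,\nu}\colon Z_{\lambda,\nu}\to V^\lambda/L_\lambda$ (affine fibration composed with quotient by the unipotent radical) and proper right leg $q_{\lambda,\nu}\colon Z_{\lambda,\nu}\to V^\nu/L_\nu$ (closed immersion composed with partial-flag-variety fibration), decorated with the vanishing cycle functor for $f$. For a triple $\lambda\preceq\mu\preceq\nu$, I introduce the twice-nested parabolic $P_{\lambda,\mu,\nu}\subseteq P_{\mu,\nu}$ defined as the preimage of $P_{\lambda,\mu}\subseteq L_\mu$ under the Levi quotient $P_{\mu,\nu}\twoheadrightarrow L_\mu$, and the associated triple correspondence stack $Z_{\lambda,\mu,\nu}$ parametrising the nested configurations in $V^\nu$. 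Using base change for $\phi_f$ (its commutation with smooth pullback, with proper pushforward, and with $!$-pullback along affine fibrations, from the Properties recalled in Section \ref{vancyc}), the fiber product $Z_{\lambda,\mu}\times_{V^\mu/L_\mu} Z_{\mu,\nu}$ through which $Ind^\mu_\nu\circ Ind^\lambda_\mu$ factors is identified with $Z_{\lambda,\mu,\nu}$; the natural smooth affine-fibration $Z_{\lambda,\mu,\nu}\to Z_{\lambda,\nu}$ (contracting the $\U_{\mu,\nu}$-direction) together with the projection formula then identifies the resulting pull--push with $Ind^\lambda_\nu$.

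The main obstacle lies in the bookkeeping of relative dimensions, cohomological shifts, and positive-weight conditions through the nested parabolic. Since the partial order $\preceq$ of (\ref{ord}) only controls the Levi-fixed subspaces $V^\bullet$ and $\el_\bullet$ and not the positive cones $V^{\bullet\geq 0}$, one has to check that the additional $\mu$-positivity appearing on $Z_{\lambda,\mu,\nu}$ is exactly the contribution of the unipotent radical $\U_{\mu,\nu}$, which is already absorbed in the stacky quotient and hence does not obstruct the contraction $Z_{\lambda,\mu,\nu}\to Z_{\lambda,\nu}$. Once these compatibilities are verified, associativity becomes formal from the six-functor formalism applied along the induced diagrams of frame approximations.
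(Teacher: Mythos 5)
Your plan is correct and is essentially the paper's proof: the paper disposes of the lemma in one line by citing \cite{KS11}, and the triple parabolic correspondence with base change for $\phi_f$ that you describe is exactly the argument that citation refers to, while your first reduction step is in fact immediate from (\ref{indcot}) because the two intermediate dimensional-reduction isomorphisms at $\mu$ cancel on the nose. The only point requiring genuine care is the one you yourself flag --- identifying the fiber product $Z_{\lambda,\mu}\times_{V^\mu/L_\mu}Z_{\mu,\nu}$ (with its extra $\mu$-positivity and the nested parabolic $P_{\lambda,\mu,\nu}$) with $Z_{\lambda,\nu}$, which is automatic for the compatible cocharacters of the quiver setting but not obviously so under the order (\ref{ord}), which constrains only $V^\bullet$ and $\el_\bullet$ --- and the paper is silent on this as well.
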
 
\begin{proof}
    This is proven the same way as in \cite{KS11}.
\end{proof}

\section{Torsion freeness}\label{TF}
Let $V$ be a representation of reductive group $G$ and suppose $T_s$ is an auxiliary torus acting on $T^*V\times \g$ and verify the following \textbf{assumptions on }$T_s$ :

\begin{itemize}
    \item  $T_s$ acts on $T^*V\times \g$, preserves $\mu_V^{-1}(0)\subset T^*V\times \g$, and commutes  with the action of $G$,
    \item  the function $f:T^*V\times \g\to\mbC, ~(x,x^*,\xi)\mapsto \mu_V(x,x^*)(\xi)$ is $T_s$-invariant,
    
    \item  $T_s$ contains two 1-dimensional subtori $\mbC_1^*, \mbC_2^*$ acting on $T^*V\times \g$ with weights $(1,-1,0), (1,0,-1)$, respectively
    \end{itemize}

 Denote by $\mu_V^{-1}(0)\xrightarrow{i} T^*V\times \g$ the $G\times T_s$-equivariant embedding. In this section we want to show the pushforward \begin{align}\label{locBM}
    H^{\BM}_{G\times T_s}(\mu_{V}^{-1}(0), \mbQ) \xrightarrow[]{i_*} H^{\BM}_{G\times T_s}(T^*V\times \g, \mbQ)\simeq H_{G\times T_s}
\end{align}
is an embedding under the above assumptions on $T_s$. Denote by $\pt=\mu_{V}^{-1}(0)^{\mbC^*_1}\xrightarrow{j} \mu_{V}^{-1}(0)$ the embedding of the $\mbC^*_1$-fixed locus, which is a point. We have a commutative diagram 
\[\begin{tikzcd}
	{\mu_V^{-1}(0) } & {T^*V\times \g} \\
	{\pt=\mu_{V}^{-1}(0)^{\mbC^*_1}}
	\arrow["i", hook, from=1-1, to=1-2]
	\arrow["j", hook, from=2-1, to=1-1]
	\arrow["ij", hook, from=2-1, to=1-2]
\end{tikzcd}\]
giving a commutative diagram of vector spaces
\[\begin{tikzcd}
	{H^{\BM}_{G\times T_s}(\mu_{V}^{-1}(0), \mbQ)} & {H^{\BM}_{G\times T_s}(T^*V\times \g, \mbQ)} \\
	{H_{G\times T_s}}
	\arrow["{i_*}", from=1-1, to=1-2]
	\arrow["{j^*}"', from=1-1, to=2-1]
	\arrow["{\simeq(ij)^*}", from=1-2, to=2-1]
\end{tikzcd}\]
The space $T^*V \times \g$ is $G\times T_s$-contractible to the fixed point, thus $(ij)^*$ is an isomorphism.

\begin{remark}\label{tfae}
    In this situation the following is equivalent:  
\begin{itemize}
    \item [$\cdot$] $i^*$ is an embedding,
    \item [$\cdot$] $j^*$ is an embedding,
    \item [$\cdot$] the $H_{G\times T_s}$-module $H^{\BM}_{G\times T_s}(\mu_{V}^{-1}(0), \mbQ)$ is torsion free. 
\end{itemize}
\end{remark}

For future use we state the version of Atiyah-Bott localization theorem 
\begin{theorem}\cite[Theorem 6.2 (3)]{GKM}
    Let $X$ be a complex algebraic variety with an action of a torus $K$. Let 
    $L\subseteq K$ be a subtorus.
    Denote by $I=\ker(H_K\to H_L)$ the prime ideal of functions on the Lie algebra of $K$ that vanish on the Lie algebra of $L$, and by $S= H_K\backslash I$ the its complement. 
     Then the localized restriction map to the $L$ fixed locus $X^L$ is an isomorphism
     \begin{align*}
         H_K(X,\mbQ)[S^{-1}]\xrightarrow{\sim} H_K(X^L,\mbQ)[S^{-1}] 
     \end{align*}
\end{theorem}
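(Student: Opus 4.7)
The plan is to deduce the isomorphism from the vanishing of $H_K^*(U, \mbQ)[S^{-1}]$ where $U := X \setminus X^L$. The open--closed long exact sequence attached to the closed embedding $X^L \hookrightarrow X$ with open complement $U$,
\begin{align*}
\cdots \to H_K^i(X^L, \mbQ) \to H_K^i(X, \mbQ) \to H_K^i(U, \mbQ) \to \cdots,
\end{align*}
remains exact after localizing at the multiplicative set $S$ because localization is flat, so once $H_K^*(U,\mbQ)[S^{-1}] = 0$ is established the localized restriction $H_K^*(X,\mbQ)[S^{-1}] \to H_K^*(X^L,\mbQ)[S^{-1}]$ is forced to be an isomorphism.

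The heart of the argument is at the level of a single $K$-orbit $K\cdot u \subset U$. Via the homotopy equivalence $K\cdot u \simeq K/K_u$ one has $H_K^*(K\cdot u, \mbQ) \simeq H_{K_u}^*(\pt, \mbQ)$, and the $H_K$-module structure factors through the surjection $H_K \twoheadrightarrow H_{K_u}$ whose kernel $I_u$ is the ideal of polynomial functions on $\mathrm{Lie}(K)$ vanishing on $\mathrm{Lie}(K_u)$. The key observation is that $u \notin X^L$ together with $L$ being a connected torus forces the $L$-stabilizer $L \cap K_u$ to be a proper subgroup of $L$, so $\mathrm{Lie}(L) \not\subseteq \mathrm{Lie}(K_u)$. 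Choosing a linear form on $\mathrm{Lie}(K)$ that vanishes on $\mathrm{Lie}(K_u)$ but not on $\mathrm{Lie}(L)$ produces an element $s \in I_u \cap S$ (since $I$ is by definition the ideal of forms vanishing on $\mathrm{Lie}(L)$); it annihilates $H_K^*(K\cdot u, \mbQ)$ while being a unit in the localization, so the localized cohomology of the orbit vanishes.

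To pass from orbits to all of $U$, I would stratify $U$ by $K$-orbit type. In the algebraic setting one has a finite $K$-invariant locally closed stratification $U = \bigsqcup_\alpha S_\alpha$ whose strata have constant stabilizer conjugate to some $K_\alpha$, each satisfying $L \cap K_\alpha \subsetneq L$; each stratum is a $K$-equivariant fiber bundle over $S_\alpha / K$ with fiber $K/K_\alpha$. A Leray-type spectral sequence then shows the $H_K$-module $H_K^*(S_\alpha, \mbQ)$ is annihilated by some element of $I_{K_\alpha} \cap S$ supplied by the orbit argument, so it vanishes after $S$-localization. A descending induction along the closure order of strata, using the open--closed long exact sequence and the five-lemma at each step, then yields $H_K^*(U, \mbQ)[S^{-1}] = 0$.

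The main obstacle is the bookkeeping in this globalization: one needs the stratification to be compatible with the finite-dimensional approximations $X_N = (X \times \fr(n,N))/K$ used to define $H_K^*(X, \mbQ)$, and one must check that localization commutes with the inverse limit over $N$. The algebraic setting supplies a finite Luna-type stratification, and because $H_K^*(X_N,\mbQ)$ stabilizes in each degree as $N \to \infty$ and $H_K$ is a Noetherian polynomial ring, localization is exact and interacts well with the limit. A cleaner alternative would be to work sheaf-theoretically in the $K$-equivariant derived category and run the same orbit-type argument there, bypassing finite-dimensional approximations entirely.
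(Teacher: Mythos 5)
The paper does not prove this statement at all: it is quoted verbatim as \cite[Theorem 6.2 (3)]{GKM} and used as a black box (followed only by the $\mathbb{P}^1$ example), so there is no in-paper argument to compare against. Your proposal is essentially the standard proof of the localization theorem as found in Goresky--Kottwitz--MacPherson and in Quillen's and Atiyah--Bott's treatments: the orbit-level computation (for $u\notin X^L$ the connectedness of $L$ gives $\mathrm{Lie}(L)\not\subseteq\mathrm{Lie}(K_u)$, hence a character of $K$ lying in $\ker(H_K\to H_{K_u})\cap S$ that kills $H_K^*(K\cdot u)$ after localization), the finiteness of stabilizer types for a torus acting on a finite-type variety, and the induction over strata. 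Those are exactly the right ingredients, and your remark that one ultimately wants to verify compatibility with the finite-dimensional approximations (or work directly in the equivariant derived category) is well taken.

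One step is stated imprecisely and, as written, does not quite close the argument. For equivariant \emph{cohomology} there is no long exact sequence of the form $H_K^i(X^L)\to H_K^i(X)\to H_K^i(U)$: for a closed embedding the correct sequence is that of the pair, $H_K^i(X,X^L)\to H_K^i(X)\xrightarrow{i^*} H_K^i(X^L)\to H_K^{i+1}(X,X^L)$, and what you must show is that the relative term is $S$-torsion. (The sequence you wrote is the open--closed sequence for Borel--Moore homology, where the localized isomorphism is the \emph{pushforward} from $X^L$, not the restriction; this is in fact the form in which the paper applies the theorem in the proof of Theorem \ref{tfree}, so the slippage is understandable.) The repair is standard: by excision/tautness for algebraic varieties, $H_K^*(X,X^L)$ is computed from the strata of $U=X\setminus X^L$, so your stratification argument shows it vanishes after inverting $S$, and the pair sequence then gives the claimed isomorphism for $i^*$. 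Relatedly, at the stratum level the Serre spectral sequence shows that a \emph{power} of your chosen $s\in I_{K_\alpha}\cap S$ annihilates $H_K^*(S_\alpha)$ (the power bounded by the cohomological dimension of the base); since $S$ is multiplicative this still yields vanishing after localization, but the annihilation is not by $s$ itself on the nose.
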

We illustrate the theorem by an example.

\textbf{Example}
   
    Let the torus $(\mbC^*)^2$ acts on $\mathbb{P}^1$ by 
    \begin{align*}
        [x:y]\mapsto [t_1 x:t_2 y]
    \end{align*} with fixed locus, consisting of two points $[1:0]$ and $[0:1]$. Consider the subtorus $\mbC^* \subset (\mbC^*)^2, z\mapsto (z,z^{-1})$ with the same fixed locus. The ideal $I=\ker(\mbQ[z,w]\to \mbQ[t], z\mapsto t, w\mapsto -t)$ is generated by $z+w$. We have
    \begin{align*}
        H_{(\mbC^*)^2}(\mathbb{P}^1,\mbQ)\simeq \mbQ[z,w,u]/(u-z)(u-w)
    \end{align*}
    where $u=c_1^{(\mbC^*)^2}(\mathcal{O}(1))\in H^2_{(\mbC^*)^2}(\mathbb{P}^1,\mbQ)$ and $z,w$ are the first Chern classes of $\mathcal{O}(1)$ over  $B(\mbC^*)^2=\mbC\mathbb{P}^\infty\times \mbC\mathbb{P}^\infty$. The cohomology of the fixed locus is $$H_{(\mbC^*)^2}(\pt\coprod\pt,\mbQ)=\mbQ[z,w]^{\oplus 2}.
    $$
    
    Inverting the function $z-w$, which is in the complement of $I$, ideals $(u-z),(u-w)\subset \mbQ[z,w,u]$ become coprime in the localization. By Chinese reminder theorem,
    $$ H_{(\mbC^*)^2}(\mathbb{P}^1,\mbQ)[\frac{1}{z-w}] \simeq \mbQ[z,w,\frac{1}{z-w}]^{\oplus 2}.$$
    The localized restriction map yields an isomorphism
    \begin{align*}
        H_{(\mbC^*)^2}(\mathbb{P}^1,\mbQ)[\frac{1}{z-w}] \xrightarrow{i^*} \mbQ[z,w][\frac{1}{z-w}]\oplus \mbQ [z,w][\frac{1}{z-w}]
    \end{align*}
    $$u\mapsto (z,w).$$

 Note that for any linear action of $T_s$ on $\g$ the nilpotent cone $\mathcal{N}\subset \g$ and nilpotent orbits $\mbO$ are invariant under the action of $G\times T_s$.

\subsection{The statement}
In this section we prove

\begin{theorem}\label{tfree}
    Under the above assumptions on $T_s$, the $H_{G\times T_s}$-module $H^{\BM}_{G\times T_s}(\mu_{V}^{-1}(0), \mbQ)$ is torsion free.
\end{theorem}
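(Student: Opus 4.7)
The plan is to adapt the shuffle-algebra embedding arguments of \cite{SV22} and \cite{D22} to the cotangent setting. By the remark preceding the statement, torsion-freeness of $H^\BM_{G\times T_s}(\mu_V^{-1}(0))$ over $H_{G\times T_s}$ is equivalent to injectivity of the fixed-point restriction $j^*$, so I focus on the latter. The first input is Atiyah--Bott localization applied to $\mbC^*_1\subset T_s$: since $\mbC^*_1$ acts on $V\oplus V^*$ with the nonzero weights $\pm 1$, its fixed locus on $\mu_V^{-1}(0)\subset T^*V$ is $\{(0,0)\}$. Hence $j^*$ becomes an isomorphism after inverting the nonzero $\mbC^*_1$-characters, and $\ker j^*$ is annihilated by some element of $S$, the complement of the ideal of classes in $H_{G\times T_s}$ vanishing on $\Lie\mbC^*_1$.

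To upgrade this $S$-torsion-freeness to $H_{G\times T_s}$-torsion-freeness, I would use the Hall induction of Section \ref{HallInd}. For a generic cocharacter $\lambda$ with $L_\lambda=T$ and $V^\lambda=0$ the source $H^\BM_{T\times T_s}(\mu_\lambda^{-1}(0))$ is (up to shift) just $H_{T\times T_s}$, tautologically torsion-free. The inductions $Ind_0^\lambda$ assemble into a natural map from an explicit space of $H_{T\times T_s}$-valued symmetric functions (a ``shuffle space'') into $H^\BM_{G\times T_s}(\mu_V^{-1}(0))$. Tracing through the five steps defining (\ref{KSHall}) and invoking the dimensional reduction of Example \ref{ex1dimred}, the composition $j^*\circ Ind_0^\lambda$ is given by a shuffle-type formula of the form (\ref{shuf}), with additional weight factors recording the $T\times T_s$-action on $V^*$ and $\g$ introduced by the vanishing-cycle passage.

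The crux, and the main obstacle, is to show injectivity of this shuffle map together with the fact that the Hall induction image is rich enough to detect all of $\ker j^*$. Both subtori play essential and complementary roles here: the weight $(1,-1,0)$ of $\mbC^*_1$ keeps the $V\otimes V^*$ denominators non-degenerate in $H_{T\times T_s}$, while $(1,0,-1)$ of $\mbC^*_2$ does the same for the $V\otimes \g$ denominators, ruling out accidental cancellations in the shuffle formula. Combined with the localization statement of the first paragraph, which already guarantees that the Hall induction image surjects onto $H^\BM_{G\times T_s}(\mu_V^{-1}(0))$ modulo $S$-torsion, the injectivity of the shuffle map yields injectivity of $j^*$ and hence the theorem. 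The detailed weight bookkeeping parallels that of \cite{SV22, D22} in the quiver case; the novelty lies in adapting from the quiver moment map to the general moment map $\mu_V$, where one must check that the assumptions on $\mbC^*_1,\mbC^*_2$ alone suffice to produce the shuffle denominators needed in the quiver argument.
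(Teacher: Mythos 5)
Your first step (Atiyah--Bott localization at $\mbC^*_1$, whose fixed locus on $\mu_V^{-1}(0)$ is the origin, reducing the theorem to the absence of $S$-torsion for $S=H_{G\times T_s}\setminus I_1$) agrees with the paper. But the second half, which is where all the content lies, does not go through as described. You propose to rule out $S$-torsion from (a) injectivity of $j^*\circ Ind$ as an explicit shuffle formula and (b) surjectivity of the Hall induction image ``modulo $S$-torsion''. Even granting both, the deduction fails: if $x\in\ker j^*$, then (b) gives $sx=Ind(y)$ for some $s\in S$; applying $j^*$ and using (a) gives $y=0$, hence $sx=0$, i.e.\ $x$ is $S$-torsion --- which is exactly what localization already told you, not a contradiction. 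To conclude you would need the \emph{un-localized} module to be generated by the induction image, a statement at least as strong as the theorem and not supplied by localization. Note also that in \cite{SV22,D22} the logical order is the reverse of what you suggest: torsion-freeness is proved first by a geometric argument and is then an input to the shuffle-algebra embedding, not a consequence of it. Finally, your shuffle formula and its injectivity are asserted, not established.

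The paper's actual argument for the remaining step is geometric and uses the two subtori in ways your sketch does not capture. After a double dimensional reduction identifying $H^\BM_{G\times T_s}(\mu_V^{-1}(0),\mbQ)$ with $H^\BM_{G\times T_s}(\{(x,a)\in V\times\g: a.x=0\},\mbQ)$, one: (i) proves freeness over $k_2=H_{\mbC^*_2}$ via a purity/Leray spectral sequence argument (Lemma \ref{eemb}), so that $-\otimes_{k_2}K_2$ is injective; (ii) shows, following \cite[Proposition 5.2]{SV22}, that the pushforward from the locus over the nilpotent cone $\mathcal{N}\subset\g$ becomes an isomorphism after inverting $k_2$ --- this is where the weight $(1,0,-1)$ of $\mbC^*_2$ is used; (iii) stratifies by nilpotent orbits, using the purity of $H^\BM_{G\times T_s}(\mbO_\lambda,\mbQ)\simeq H_{Stab_\lambda}$ to split the long exact sequences and embed the relevant module into $\bigoplus_\lambda H_{Stab_\lambda}$; and (iv) observes that since $\mbC^*_1$ acts with weight $0$ on $\g$, it lies in every stabilizer $Stab_\lambda$, so $\ker(H_{G\times T_s}\to H_{Stab_\lambda})\subset I_1$ and each $H_{Stab_\lambda}$ is $S$-torsion-free. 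None of (i)--(iv) appears in your proposal, and the ``crux'' you identify is neither needed for the paper's route nor proved in yours.
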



\begin{proof}

Recall the isomorphism of $H_{G\times T_s}$ with $G\times T_s$-invariant functions on its Lie algebra $H_{G\times T_s}\simeq \mbQ[\g \times \T_s]^{G\times T_s}$. Denote by $I_1\subset H_{G\times T_s}$ the prime ideal of functions vanishing on the Lie algebra of $\mbC^*_1$. In other words, $I_1=\ker (H_{G\times T_s}\to \mathrm{Lie}(\mbC^*_1))$.  Denote by $S=H_{G\times T_s}\backslash I_1$ the complement to $I_1$, its elements form a multiplicative system.  The localized restriction map (we invert elements from $S$)  
\begin{align}\label{AB}
H^{\BM}_{G\times T_s}(\mu_{V}^{-1}(0), \mbQ)[S^{-1}]
    \xrightarrow{j^*}
H_{G\times T_s}[S^{-1}]
\end{align}
is an isomorphism by the variant of Atiyah-Bott localization theorem \cite[Theorem 6.2 (3)]{GKM} and the isomorphism $$H^{\BM}_{G\times T_s}(\mu_V^{-1}(0),\mbQ))\simeq H^{\BM}_{D\times T_s}(\mu_V^{-1}(0),\mbQ))^W$$ where $D\subset G$ is the maximal torus and $W=N_G(D)/D$ is the Weyl group. The RHS of \ref{AB} is torsion free, hence $H^{\BM}_{G\times T_s}(\mu_{V}^{-1}(0),\mbQ)$ is torsion free over $H_{G\times T_s}$ if and only if it is torsion free over $S$.

Denote by $k_1 = H_{\mbC^*_1}, ~ k_2 = H_{\mbC^*_2}$ and by $K_1,~ K_2$ their fields of fraction. We have the variant of \cite[Theorem 9.6]{D22}
\begin{lemma}\label{eemb}
    The module $H^{\BM}_{G\times T_s}(\mu_{V}^{-1}(0),\mbQ)$ is free as $k_2$-module. As a consequence, the natural map 
    \begin{align*}
    H^{\BM}_{G\times T_s}(\mu_{V}^{-1}(0),\mbQ) \to H^{\BM}_{G\times T_s}(\mu_{V}^{-1}(0),\mbQ)\otimes_{k_2} K_2
\end{align*}
is injective.
\end{lemma}
 \begin{proof}
The argument goes along the lines of the proof of \cite[Theorem 9.6]{D22} coupled with the purity of $H^\BM_G(\mu_V^{-1}(0),\mbQ)$ \cite[Corollary 1.11]{H25a}. Namely, pick a splitting $T_s \simeq T^\prime \times T_\chi$. Then $\T\simeq \T^\prime \oplus \T_\chi$ and $H_{T_s}\simeq H_{T^\prime} \otimes H_{T_\chi}$. Consider a $L_\lambda\times T_s$-variety  $$V_{N}=\fr(n,N)\times \Hom(\mbC^N,\T)$$ where $L_\lambda$ acts on $\fr(n,N)$ via the fixed embedding $L_\lambda \subset G \subset GL_n(\mbC)$ by changing tuples of $n$ linearly independent vectors in $\mbC^N$ and $T_s$ acts on $\Hom(\mbC^N,\T)$ by scaling in the image with weights 1. The group $L_\lambda \times T_s$ acts freely on the open subset $U_{N}\subset V_{N}$, defined by asking the linear maps in $\Hom(\mbC^N,\T)$
to be surjective. Denote by 
$L_\lambda^\tau:= L_\lambda \times T_s$ and $L_\lambda^\prime:= L_\lambda \times T^\prime$.
Define the smooth varieties 
\begin{align*}
 Y_{\lambda,N}:= (T^*V^\lambda\times \el_\lambda) \times^{L_\lambda^\tau} U_{N}\\
 Y_{\lambda,N}^\prime:= (T^*V^\lambda\times \el_\lambda) \times^{L_\lambda^\prime} U_{N}
\end{align*}
and denote by $f_{\lambda,N}$ and $f_{\lambda,N}^\prime$ the induced functions on them. 
 We have a morphism $$v_{\lambda,N}:Y_{\lambda,N}\to U_{N}/L_\lambda^\tau \to \Hom^{\mathrm{surj}}(\mbC^N,\T_\chi)/T_\chi:=S_{\chi,N}.$$
 The map $v_{\lambda,N}$ is locally trivial with fiber $Y_{\lambda,N}^\prime$. Then the sheaves $R^q v_{\lambda,N,*}\mbD\phi_{f_{\lambda,N}}\mbQ$ are local systems with fiber $H^q(Y_{\lambda,N}^\prime, i^*\mbD\phi_{f^\prime_{\lambda,N}}\mbQ)$, where $i$ is an embedding $Y_{\lambda,N}^\prime \to Y_{\lambda,N}$ of the fiber, denote by $c$ its codimension. By \cite[Lemma 9.5]{D22} the target manifold $S_{\chi,N}$ is simply connected, then the local systems are trivial. 

The Leray spectral sequence associated to the morphism $v_{\lambda,N}$ has on its second page 
$$E^{p,q}_2=H^p(S_{\chi,N}, R^q v_{\lambda,N,*}\mbD\phi_{f_{\lambda,N}}\mbQ) \simeq H^p(S_{\chi,N},\mbQ)\otimes H^q(Y_{\lambda,N}^\prime,i^*\mbD\phi_{f_{\lambda,N}}\mbQ)$$
and converges to $H^{p+q}(Y_{\lambda,N},\mbD\phi_{f_{\lambda,N}}\mbQ)=H^\BM_{-p-q}(Y_{\lambda,N},\phi_{f_{\lambda,N}}\mbQ)$.

We have $i^*\mbD\phi_{f_{\lambda,N}}\mbQ=\mbD i^! \phi_{f_{\lambda,N}}\mbQ = \mbD i^* \phi_{f_{\lambda,N}}\mbQ[-2c],$ 
then 
\begin{align}\label{E2}
    E^{p,q}_2 = H^p(S_{\chi,N},\mbQ)\otimes H^\BM_{-q+2c}(Y_{\lambda,N}^\prime, \phi_{f_{\lambda,N}^\prime}\mbQ)).
\end{align}
 By \cite[Corollary 1.11]{H25a} and \cite[Lemma 9.5]{D22} the RHS of (\ref{E2}) is pure. Then the spectral sequence $E^{\bullet,\bullet}_\bullet$ degenerates on its second page. 

    The rest of the argument is similar to that in \cite[Theorem 9.6]{D22}.
    \end{proof}

Combining dimensional reduction isomorphisms from  Examples \ref{ex1dimred} and \ref{ex2dimred} one gets
\begin{align*}
    H^{\BM}_{G\times T_s}(\mu_{V}^{-1}(0),\mbQ) \simeq H^{\BM}_{G\times T_s}(\{(x,a)\in V\times \g : a.x = 0\},\mbQ).
\end{align*}

By the argument in \cite[Proposition 5.2]{SV20}
the localized pushforward map
\begin{align*}
    H^{\BM}_{G\times T_s}(\{(x,a)\in V\times \mathcal{N} : a.x = 0\},\mbQ)\otimes_{k_2}K_2\to
    H^{\BM}_{G\times T_s}(\{(x,a)\in V\times \g : a.x = 0\},\mbQ)\otimes_{k_2}K_2
\end{align*}
is an isomorphism.

The space $\widehat{\mathcal{N}}:=\{(x, a) \in V\times \mathcal{N} : a.x = 0\}$ is stratified 
\begin{align*}
    \widehat{\mathcal{N}} = \coprod_\lambda \{(x, a) \in V\times \mbO_\lambda : a.x = 0\}.
\end{align*}
\begin{remark}
The $G\times T_s$-equivariant projection $\widehat{\mathcal{N}}\to \mathcal{N}$ restricted to each stratum is an affine fibration, inducing an isomorphism (up to shift) in Borel-Moore homology
 \begin{align*}
     H^\BM_{G\times T_s}(\{(x, a) \in V\times \mbO_\lambda : a.x = 0\},\mbQ)\to H^\BM_{G\times T_s}(\mbO_\lambda,\mbQ).
 \end{align*}
\end{remark}

To show $H^\BM_{G\times T_s}(\widehat{\mathcal{N}},\mbQ)$ has no $S$-torsion we show this for the space $H^\BM_{G\times T_s}({\mathcal{N}},\mbQ)$ and from the argument and by the remark above this will be sufficient.

Given a complex algebraic variety $X$ with an action of a complex algebraic group $G$ and a closed $G$-invariant subset $Z\subset X$, denote by 
\[\begin{tikzcd}
	Z & X & {U=X-Z}
	\arrow["i", hook, from=1-1, to=1-2]
	\arrow["j"', hook', from=1-3, to=1-2]
\end{tikzcd}\]
the closed an open embeddings leading a long exact sequence 
\begin{align*}
    \dots \to H^{\BM}_{i,G}(Z,\mbQ)\to H_{i,G}^\BM(X,\mbQ)\to H_{i,G}^\BM(U,\mbQ)\xrightarrow{\delta}H^{\BM}_{i-1,G}(Z,\mbQ)\to \dots 
\end{align*}

Moreover, this is a sequence of mixed Hodge structures. 

Having a (finite)stratification of nilpotent cone by nilpotent orbits $\mathcal{N}=\coprod_{\lambda}\mbO_{\lambda}$, we will, by induction   on dimensions of orbits, successively split off the strata of increasing dimension. 



Namely, we have a filtration of nilpotent cone
\begin{align*}
    \cdots\subset X_{k-1}\subset X_k\subset\cdots\subset \mathcal{N}
\end{align*}

by closed invariant subsets
\begin{align*}
X_k=\bigcup_{\dim \mbO_\lambda \leq a_k}\mbO_\lambda
\end{align*}
with 
\begin{align*}
    Z_k=X_{k}-X_{k-1} = \coprod_{\dim \mbO_\lambda=a_k} \mbO_\lambda.
\end{align*}
for certain numbers $0=a_0<a_1<a_2<\dots$, assuming $X_{-1}=\emptyset$.

We have an open-closed pair 
\[\begin{tikzcd}
	{X_{k-1}} & {X_k} & {Z_k}
	\arrow["i", hook, from=1-1, to=1-2]
	\arrow["j"', hook', from=1-3, to=1-2]
\end{tikzcd}\]
and it leads to a long exact sequence of mixed Hodge structures
\begin{align*}
    \dots \xrightarrow{\delta} H^{\BM}_{i,\G}(X_{k-1},\mbQ)\to H_{i,\G}^\BM(X_k,\mbQ)\to H_{i,\G}^\BM(Z_k,\mbQ)\xrightarrow{\delta}H^{\BM}_{i-1,\G}(X_{k-1},\mbQ)\to \dots. 
\end{align*}
To show its splitting, we show the Hodge structures $H_{i,\G}^\BM(Z_k,\mbQ),~H^{\BM}_{i-1,\G}(X_{k-1},\mbQ)$ are pure but of different weights. Hence the connecting homomorphism $\delta=0$.

\begin{lemma}
    We have  $H^{\BM}_{2i+1, \G}(\mbO_{\lambda}, \mbQ)=0$ and the mixed Hodge structure $$H^{\BM}_{2i, \G}(\mbO_{\lambda}, \mbQ)\otimes \mbC$$ is pure.
\end{lemma}
\begin{proof}
    One has $H^{\BM}_{i, G\times T_s}(\mbO_{\lambda},\mbQ)= H^\BM_{i-2\dim G\times T_s}(G\times T_s \backslash G\times T_s /Stab_\lambda, \mbQ) =H^{-i+2\dim G\times T_s}(BStab_\lambda,\mbQ)$ and the result follows from \cite[Théorème 9.1.1]{DIII}. Here we recall the argument.
   For a complex linear algebraic group $G$ let $G^0$ be the connected component of 1 in $G$ and $T\subset G^0$ be the maximal torus with $W$ the Weyl group. Then $H(BG,\mbQ)=H(BG^0,\mbQ)^{G/G^0}$ and $H(BG^0,\mbQ)\simeq H(BT,\mbQ)^W$.  One has $T\simeq \mbG_m^n$ and by Künneth it is enough to consider $T=\mbG_m$. 
    Then $H^{2i}(BT,\mbQ)=H^{2i}(\mbC\mathbb{P}^\infty,\mbQ)=\mbQ$ is generated by a class Poincaré dual to algebraic cycle $[\mbC\mathbb{P}^i]$ and so is pure of type $(i,i)$, and is zero in odd degrees. Taking invariants wrt finite groups $G/G^0$ and $W$ does not effect the Hodge type. We conclude $H^{\BM}_{2i, G\times T_s}(\mbO_{\lambda}, \mbQ)\otimes \mbC$ is pure of type $(-i+\dim G\times T_s,-i+\dim G\times T_s)$.
\end{proof}

\begin{corollary}
    We have  $H^{\BM}_{2i+1, \G}(Z_k, \mbQ)=0$ and the mixed Hodge structure $$H^{\BM}_{2i, \G}(Z_k, \mbQ)\otimes \mbC$$ is pure.
\end{corollary}
\begin{proof}
    Borel-Moore homology are additive on disjoint unions and direct sum of pure Hodge structures is pure.
\end{proof}

\begin{lemma}
    Mixed Hodge structure $H^{\BM}_{i,\G}(X_{k},\mbQ)$ is pure.
\end{lemma}
\begin{proof}
    By induction on $k$. We have $X_0$ the nilpotent orbit of minimal dimension, a point $0\in \mathcal{N}$. We have $H^{\BM}_{i,\G}(X_0,\mbQ)=H^{\BM}_{i-2\dim \G}(B\G,\mbQ)$ is pure. By induction, assume $H^{\BM}_{i,\G}(X_{k-1},\mbQ)$ is pure. We use the fact that if $A$ and $B$ are pure Hodge structures of different weights, then $\Hom_{MHS}(A,B)=0$. Then homomorphisms $\delta$ vanish. When $i$ is odd we get isomorphisms $H^{\BM}_{i,\G}(X_{k-1},\mbQ)\to H^{\BM}_{i,\G}(X_{k},\mbQ)$ and then the latter is pure. When $i$ is even we have a short exact sequence
    \begin{align*}
    0\to H^{\BM}_{i,\G}(X_{k-1},\mbQ)\to H_{i,\G}^\BM(X_k,\mbQ)\to H_{i,\G}^\BM(Z_k,\mbQ)\to 0. 
\end{align*}
An extension of pure structures is pure.
\end{proof}

 The purity imply the splitting of the long exact sequences into direct sums of short exact sequences, inducing a filtration on $H^{\BM}_{i,\G}
 (\mathcal{N}, \mbQ)$ 
 \begin{align*}
     \cdots\subset H^{\BM}_{i,\G}  (X_{k-1}, \mbQ)\subset H^{\BM}_{i,\G}  (X_k, \mbQ)\subset\dots \subset H^{\BM}_{i,\G}
 (\mathcal{N}, \mbQ)
 \end{align*}
 
 whose associated graded is $\bigoplus_{\lambda}H^{\BM}_{\G}(\mbO_{\lambda}, \mbQ)$. 
The morphism $$H^{\BM}_{\G}(\mathcal{N}, \mbQ) \to \bigoplus_{\lambda}H^{\BM}_{\G}(\mbO_{\lambda}, \mbQ)$$ is injective. The same argument applied to $\widehat{\mathcal{N}}$ allows to show the morphism $$H^{\BM}_{\G}(\widehat{\mathcal{N}}, \mbQ) \to \bigoplus_{\lambda}H^{\BM}_{\G}(\widehat{\mbO}_{\lambda}, \mbQ)$$ is injective. Since $H^{\BM}_{\G}(\widehat{\mbO}_{\lambda}, \mbQ)\to H^{\BM}_{\G}({\mbO}_{\lambda}, \mbQ)$ are isomorphisms, it is enough
to check that each $H^{\BM}_{\G}(\mbO_{\lambda}, \mbQ)\simeq H_{Stab_\lambda}$ has no $S$- torsion.

Here the $H_{G\times T_s}$-module structure on $H_{Stab_\lambda}$ comes from the inclusion $Stab_\lambda\subset G\times T_s$.
 Recall that the subtorus $\mbC^*_1\subset T_s$ acts with weights $(1,-1,0)$ on $V\times V^*\times\g$. In particular, it does not act on $\g$. That means that $\mbC^*_1$ sits inside each stabilizer $Stab_{\lambda}$. That means that the kernel 
\begin{align*}
    \ker(H_{G\times T_s}\to H_{Stab_{\lambda}}),
\end{align*}
consisting of functions on $\g\times \T_s$ restricting by zero to $\mathrm{Lie}(Stab_{\lambda})$, restricts by zero on $\mathrm{Lie}(\mbC^*_1)$ as well. That means that $\ker(H_{G\times T_s}\to H_{Stab_{\lambda}}) \subset I_1$. For the rest of the proof we adopt the notations: the ring $R:=H_{G\times T_s}$, the $R$-module $M:=H_{Stab_\lambda}$, the ideal $K:=\ker(H_{G\times T_s}\to H_{Stab_{\lambda}})\subset R$.
As before $I_1\subset R$ the ideal of functions vanishing on the Lie algebra of $\mbC^*_1$, and $S=R\backslash I_1$ is its complement.

To show $M$ has no $S$-torsion we need to show $Ann(m)\subset K$ for all $m\in M$.
\begin{lemma}
    Let $G$ be a complex connected reductive group and $H\subset G$ a subgroup. The image of the induced restriction map 
    \begin{align*}
        H(BG,\mbQ)\xrightarrow{res} H(BH,\mbQ)
    \end{align*}
    is a domain. 
\end{lemma}
\begin{proof}
    We have a short exact sequence of groups $1\to H_0\to H\to \pi_0(H)\to 1$ where $H_0$ is a connected component of 1 and $\pi_0(H)$ is a finite group of components. We have $H(BH,\mbQ) = H(BH_0,\mbQ)^{\pi_0(H)}$ is the subspace of $\pi_0(H)$-invariants. Recall an isomorphism with Weyl group invariant functions  $H(BH_0,\mbQ)\simeq \mbQ[\mathfrak{h}_0]^{W_0}$, where $\mathfrak{h}_0$ is a Lie algebra of a maximal torus in $H_0$ and $W_0$ is a finite Weyl group. By the theorem of Chevalley, invariant rings of finite reflexion groups are polynomial rings, hence are domains. A subring $H(BH,\mbQ)\subset H(BH_0,\mbQ)$ of $\pi_0(H)$-invariants is hence a domain as well. 
\end{proof}
Applying for $H=Stab_\lambda$, we get $R/K\simeq \im(res)$ is a domain and $R/K$ acts on $M$ via the same restriction map, since elements from $K$ act by zero. Suppose $Ann(m) \ni r\neq 0$ for some $0\neq m\in M$, that is $rm =0$. Denote by $[r]$ its image in $R/K$. Then $[r]m=0$. Before we showed $M$ has no $I_1$-torsion. Since $K\subset I_1$, $M$ has no $K$-torsion. Together with Lemma above we see that $[r]m=0$ implies $[r]=0$ that is $r\in K$. QED 
\end{proof}

\begin{corollary}\label{corr}
    $H_{G\times T_s}^\BM(\mu_V^{-1}(0),\mbQ)$ is concentrated in even homological degrees.
\end{corollary}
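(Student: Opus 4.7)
The plan is to extract the corollary directly from the ingredients developed in the proof of Theorem \ref{tfree}, since essentially every intermediate object that appears there is already shown to be concentrated in even homological degrees.

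First, I would apply the combined dimensional reduction isomorphisms of Examples \ref{ex1dimred} and \ref{ex2dimred} to identify
\begin{align*}
H^\BM_{G\times T_s}(\mu_V^{-1}(0), \mbQ) \simeq H^\BM_{G\times T_s}(\widehat{V}, \mbQ)[2(\dim V - \dim \g)],
\end{align*}
where $\widehat{V} := \{(x,a)\in V\times \g : a.x = 0\}$. The shift is even, so it suffices to show that $H^\BM_{G\times T_s}(\widehat{V}, \mbQ)$ is concentrated in even degrees.

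Next, I would use Lemma \ref{eemb}, which, transported through the $k_2$-linear dimensional reduction isomorphism, tells us that $H^\BM_{G\times T_s}(\widehat{V}, \mbQ)$ is free over $k_2 = H_{\mbC^*_2}$. In particular, the map
\begin{align*}
H^\BM_{G\times T_s}(\widehat{V}, \mbQ) \hookrightarrow H^\BM_{G\times T_s}(\widehat{V}, \mbQ)\otimes_{k_2} K_2
\end{align*}
is injective. By the Schiffmann--Vasserot argument \cite[Proposition 5.2]{SV22}, the target is canonically isomorphic to $H^\BM_{G\times T_s}(\widehat{\mathcal{N}}, \mbQ)\otimes_{k_2} K_2$, where $\widehat{\mathcal{N}} = \{(x,a)\in V\times \mathcal{N}: a.x=0\}$.

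Finally, the purity statement established inside the proof of Theorem \ref{tfree} --- namely that each $H^\BM_{G\times T_s}(\mbO_\lambda,\mbQ)\simeq H_{\mathrm{Stab}_\lambda}$ vanishes in odd degrees and is pure of Tate type in even degrees --- forces the long exact sequences of the orbit stratification of $\widehat{\mathcal{N}}$ to split into short exact sequences, so $H^\BM_{G\times T_s}(\widehat{\mathcal{N}},\mbQ)$ is concentrated in even homological degrees. Since $k_2 \to K_2$ is localization at an element of even cohomological degree, the parity of the grading is preserved under the tensor product, and hence $H^\BM_{G\times T_s}(\widehat{V}, \mbQ)\otimes_{k_2} K_2$ is concentrated in even degrees. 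The injectivity from Lemma \ref{eemb} then transfers this property back to $H^\BM_{G\times T_s}(\widehat{V}, \mbQ)$, and the dimensional reduction shift transfers it to $H^\BM_{G\times T_s}(\mu_V^{-1}(0), \mbQ)$. I do not expect a serious obstacle beyond careful bookkeeping of parity along this chain, since no new geometric input is needed beyond what was already established for the torsion-freeness theorem.
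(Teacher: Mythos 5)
Your argument is correct, and every ingredient you invoke is indeed established in the proof of Theorem \ref{tfree}, but you are taking a much longer route than the corollary requires. The paper prints no proof because the statement is an immediate consequence of the theorem itself: torsion-freeness of $H^\BM_{G\times T_s}(\mu_V^{-1}(0),\mbQ)$ over $H_{G\times T_s}$ is equivalent (as noted in the Remark preceding the theorem) to injectivity of the pushforward $i_*\colon H^\BM_{G\times T_s}(\mu_V^{-1}(0),\mbQ)\to H^\BM_{G\times T_s}(T^*V\times\g,\mbQ)\simeq H_{G\times T_s}$. The map $i_*$ preserves homological degree, the identification of the target with $H_{G\times T_s}$ is an even shift (by $2\dim(T^*V\times\g)$), and $H_{G\times T_s}$ is concentrated in even degrees since $G$ is reductive and $T_s$ is a torus; so the odd part of the source injects into zero. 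Your chain --- dimensional reduction, freeness over $k_2$, localization onto $\widehat{\mathcal{N}}$, purity and evenness of the orbit strata, splitting of the long exact sequences, and transfer back through the injection into the localization --- reproves the evenness of the ambient object rather than simply reading it off from $H_{G\times T_s}$; it is valid (all shifts involved are even and localizing $k_2=\mbQ[u]$ at the degree-$2$ element $u$ preserves parity), but it buys you nothing over the one-line deduction, and it quietly re-traverses the hardest parts of the theorem's proof where a single citation of its conclusion suffices.
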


We illustrate the above theorem by a simple example.

\textbf{Example}

    Let $G=GL_2(\mbC)$, $T_s = (\mbC^*)^2$ and $V=\Hom(\mbC^2,\mbC^2)$. Let the element $(g, t=\begin{pmatrix} t_1 & 0 \\ 0 & t_2 \end{pmatrix}) \in G\times T_s$ act on $(x,x^*,a)\in V\times V^*\times \g \simeq \gl_2^3$ by
\begin{align*}
(g, t). (x,x^*,a)=(gxg^{-1}t_2,gx^*g^{-1}t_1^{-1},gag^{-1}t_1/t_2).
\end{align*}
Then the assumptions on $T_s$ are verified. The moment map writes as 
\[\begin{tikzcd}
	{T^*V\simeq V\times V} & {\gl_2^*\simeq \gl_2} & {(x,x^*)} & {[x,x^*]}
	\arrow["{\mu_V}", from=1-1, to=1-2]
	\arrow[maps to, from=1-3, to=1-4]
\end{tikzcd}\]
and the function $f:T^*V\times \g \to \mbC$ sends $(x,x^*,\xi)$ to $\Tr([x,x^*]\xi)$.
The two subtori are
 
\begin{align*}
\mbC^*_1= \{\begin{pmatrix} t & 0 \\ 0 & t \end{pmatrix}\}, ~\mbC^*_2 = \{\begin{pmatrix} 1 & 0 \\ 0 & t \end{pmatrix}\}. 
\end{align*}
 The nilpotent cone  
 
 $\mathcal{N}_{\mathfrak{gl}_2} = \{\begin{pmatrix} \alpha & \beta \\ \gamma & \delta \end{pmatrix}: \alpha \delta-\beta \gamma=0, ~ \alpha+\delta=)\}\subset \mbC^4
 $ is stratified by two orbits $\mbO_{(1,1)}=pt$, the orbit of $0$ and $\mbO_{(2)}$, the complement to the vertex, the orbit of $\begin{pmatrix}
     0 & 1\\
     0 & 0
 \end{pmatrix}$ with $\mbO_{(1,1)}\subset\bar{\mbO}_{(2)}$. The stabilizers are $Stab_{(1,1)}=GL_2\times T_s$ and $Stab_{(2)}=\{\begin{pmatrix} a & b \\ 0 & d \end{pmatrix}, (t_1,t_2) : \frac{t_1}{t_2}= \frac{d}{a}\}\simeq \mbG_m^3 \times \mbG_a$(omitting group structure). 
 We have 
 \begin{align*}
 &H_{GL_2(\mbC)\times T_s}=H(BGL_2(\mbC)\times BT_s)=\mbQ[c_1,c_2,\xi,\eta],\\
 &H_{Stab_{(2)}} = H(B\mbC^*\times B\mbC^* \times B\mbC^*)=\mbQ[A,B,\eta], 
 \end{align*}
Here $A,B$ are the Chern roots of the tautological bundle over $BGL_2(\mbC)$, and $c_1 = A+B, ~ c_2 = AB$ are its Chern classes. Also $\xi,~\eta$ are Chern roots of the tautological bundle over $BT_s$. From another perspective, $A$ and $B$ generate the ring of functions on a Lie algebra of diagonal torus in $GL_2(\mbC)$. From defining equation of $Stab_{(2)}$ we know $A+\xi = B+\eta$. We have $I_1 = (c_1,c_2,\xi-\eta)$. We compute
\[\begin{tikzcd}
	{H_{GL_2(\mbC)\times T_s}} & {H_{Stab_{(2)}}}
	\arrow[from=1-1, to=1-2]
\end{tikzcd}\]
\begin{align*}
    c_1 \mapsto A+B, ~c_2\mapsto AB, ~\xi  \mapsto B-A+\eta,~ \eta\mapsto \eta.
\end{align*}
Thus we get the presentation 
\begin{align*}
H_{Stab_{(2)}}=\mbQ[A,B,\eta]\simeq \mbQ[c_1,c_2,\xi, \eta]/(c_1^2-4c_2-(\xi-\eta)^2).
\end{align*}
One checks it has no torsion over the complement to $I_1$.

\section{Wheel conditions}\label{wheelC}

In the previous section we saw that  under certain assumptions on the torus $T_s$ the
pushforward \begin{align*}
    H^{\BM}_{G\times T_s}(\mu_{V}^{-1}(0), \mbQ) \xrightarrow[]{i_*} H^{\BM}_{G\times T_s}(T^*V\times \g, \mbQ)\simeq H_{G\times T_s}
\end{align*}
or, equivalently, the restriction to the fixed point
\begin{align*}
    H^{\BM}_{G\times T_s}(\mu_{V}^{-1}(0), \mbQ) \xrightarrow[]{j^*} H_{G\times T_s}
\end{align*}
are embeddings. These maps are compatible with Hall induction on the source and on the target and the knowledge of the image of restriction map gives the realization of the Hall induction on $H^{\BM}_{G\times T_s}(\mu_{V}^{-1}(0), \mbQ)$ in terms of symmetric polynomials.
In this section we study the K-theoretic version of the restriction map. 

\subsection{KHA of a one-loop quiver}
Our main source of inspiration was the paper \cite{Z} of Y.~Zhao who studied the image of a $K$-theoretic Hall algebra(KHA) of surfaces to a shuffle algebra. In particular, for $\mbA^2$ he considered the stack of 0-dimensional(hence of finite length) coherent sheaves on $\mbA^2$
\begin{align*}
    \mathrm{Coh}(\mbA^2)=\coprod \mathrm{Coh}_n(\mbA^2),
\end{align*}
where the component $\mathrm{Coh}_n(\mbA^2)$ of  length $n\geq 1$ sheaves is isomorphic to the stack of pairs 
\begin{align*}
    Comm_n:= \{(x,y)\in \gl_n(\mbC)^2: [x,y]=0\}
\end{align*}
of $n \times n$ commuting matrices, up to simultaneous conjugation 
\begin{align*}
    \mathrm{Coh}_n(\mbA^2) \simeq Comm_n/GL_n(\mbC).
\end{align*}

Let the torus $T_s=(\mbC^*)^2$ acts on $\gl_n^2$ by $(x,y)\mapsto (qx,q^\prime y)$. Clearly this action lifts to the action on $\mathrm{Coh}(\mbA^2)$. Denote by $\pt\xrightarrow{i_0} \gl_n^2 \xleftarrow{p}Comm_n$ the $GL_n(\mbC)\times T_s$-equivariant inclusions of a fixed point, the origin, and of variety $Comm_n$.

The K-theoretic Hall algebra(deformed by $T_s$) of abelian category of 0-dimensional coherent sheaves on $\mbA^2$  is a structure of an associative algebra on 
\begin{align*}
    K_{(\mbC^*)^2}(\mathrm{Coh}(\mbA^2))=\bigoplus_{n\geq 1}K_{(\mbC^*)^2}(\mathrm{Coh}_n(\mbA^2))=\bigoplus_{n\geq1} K_{GL_n(\mbC)\times (\mbC^*)^2}(Comm_n)
\end{align*}
where the product comes via the stack of extensions from a classical convolution diagram.

\begin{theorem}[\cite{Z}, Theorem 2.9]\label{Zhao}
   For each $n\geq 1$ the image of the restriction map
    \begin{align*}
        K_{GL_n(\mbC)\times (\mbC^*)^2}(Comm_n) \xrightarrow{i_0^*p_*} K_{GL_n(\mbC)\times (\mbC^*)^2} \simeq \mbZ[q^{\pm},q^{\prime\pm}][z_1^{\pm},\dots,z_n^{\pm}]^{S_n}
    \end{align*}
    is included in the $S_n$-symmetric part of the following ideal 
    \begin{align*}
        \bigcap_{i\neq j\neq k}(1 -  q^{-1} z_j/z_i,1 - q^{\prime-1} z_k/z_j),
    \end{align*}
    where the intersection is taken over all distinct triples $\{i\neq j\neq k\}\subset\{1,\dots,n\}$.
\end{theorem}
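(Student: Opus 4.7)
The plan is to derive Theorem \ref{Zhao} as a direct specialization of the general wheel conditions in Theorem \ref{whTheorem}. I would take $G=GL_n(\mbC)$ with standard maximal torus $T$ of characters $z_1,\dots,z_n$ and Weyl group $W=S_n$, the adjoint representation $V=\gl_n$ (so $V^*\simeq\gl_n$), and the auxiliary torus $T_s=(\mbC^*)^2$ acting on $V\oplus V^*\oplus\g$ as $(x,y,\xi)\mapsto(qx,\,q^\prime y,\,q^{-1}q^{\prime-1}\xi)$. One checks immediately that $\mu_V\colon V\oplus V^*\to\gl_n$ is $(x,y)\mapsto[x,y]$, so $\mu_V^{-1}(0)=Comm_n$; that $f(x,y,\xi)=\Tr(\xi[x,y])$ is $G\times T_s$-invariant; and that the two one-parameter subtori required by the hypotheses of Theorem \ref{whTheorem} sit inside $T_s$ via $t\mapsto(t,t^{-1})$ and $t\mapsto(t,1)$.

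The main step is then the combinatorial identification of the set $\Pi$ of Cartesian pairs of coordinate lines. The coordinate lines in $V$ are $l_{ab}=\mbC\cdot E_{ab}$ with $T\times T_s$-character $qz_a/z_b$, and those in $V^*$ are $l^\prime_{cd}=\mbC\cdot E_{cd}$ with character $q^\prime z_c/z_d$. Using
\begin{align*}
[E_{ab},E_{cd}]=\delta_{bc}E_{ad}-\delta_{ad}E_{cb},
\end{align*}
the pair $(l_{ab},l^\prime_{cd})$ is Cartesian if and only if this bracket does not vanish identically on $l_{ab}\oplus l^\prime_{cd}$, which happens in exactly three (not mutually exclusive) configurations: \textbf{(A)} $b=c$, $a\neq d$, yielding after relabelling $l=\mbC E_{ij}$, $l^\prime=\mbC E_{jk}$ with $i\neq k$; \textbf{(B)} $a=d$, $b\neq c$, the orientation-reversed analogue; and \textbf{(C)} $b=c$, $a=d$, $a\neq b$, i.e.\ $(l,l^\prime)=(\mbC E_{ij},\mbC E_{ji})$. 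Configuration (A) produces the generator pair
\begin{align*}
(1-\chi_l^{-1},\,1-\chi_{l^\prime}^{-1})=(1-q^{-1}z_j/z_i,\,1-q^{\prime-1}z_k/z_j),
\end{align*}
which is precisely the pair indexing Zhao's intersection.

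Theorem \ref{whTheorem} then asserts that the image of $j^*$ lies in the $S_n$-symmetric part of $\bigcap_\Pi(1-\chi_l^{-1},1-\chi_{l^\prime}^{-1})$. Since the pairwise distinct triples $(i,j,k)$ parameterizing Zhao's ideal form a sub-collection of the Cartesian pairs arising in configuration (A), Zhao's intersection is taken over fewer constraints and is therefore a \emph{larger} ideal; the containment claimed in Theorem \ref{Zhao} follows. The main obstacle is purely bookkeeping---verifying the above enumeration of Cartesian pairs and matching their characters to the shuffle variables $z_1,\dots,z_n$---with no new geometric input needed beyond Theorem \ref{whTheorem}.
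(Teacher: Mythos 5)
Your derivation is correct and matches the route the paper itself takes: Theorem \ref{Zhao} is recovered as the $GL_n(\mbC)\curvearrowright\gl_n$, $g=1$ instance of the general wheel-condition Theorem \ref{whTheorem} (see the example $G\curvearrowright\g^g$), whose proof is independent of Zhao's result, so there is no circularity; your enumeration of Cartesian pairs via $[E_{ab},E_{cd}]=\delta_{bc}E_{ad}-\delta_{ad}E_{cb}$ and the observation that Zhao's intersection runs over a sub-collection of $\Pi$ (hence is a larger ideal) correctly yield the claimed containment. The only cosmetic remark is that the two one-parameter subtori you verify are hypotheses of Theorem \ref{tfree}, not of Theorem \ref{whTheorem}, which only needs $\mu_V^{-1}(0)^{G\times T_s}$ to be a point.
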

The \textit{wheel conditions} are the divisibility conditions on symmetric polynomials lying in the image: if $R\in \mbZ[q^{\pm},q^{\prime\pm}][z_1^{\pm},\dots,z_n^{\pm}]^{S_n}$ lies in the image then
\begin{align*}
R\vert_{z_j-qz_i=0,z_k-q^\prime z_j=0}=0
\end{align*}
 for any triple $\{i\neq j\neq k\}\subset\{1,\dots,n\}$.

The stack $\mathrm{Coh}_n(\mbA^2)$ admits several other useful descriptions of its points:

\begin{itemize}
    \item cotangent stack for the adjoint representation of $GL_n(\mbC)$: the (singular) variety $Comm_n$ is the zero-level $\mu_n^{-1}(0)$ under the moment map $\mu_n: T^*\gl_n\simeq \gl_n^2\to \gl_n^*\simeq\gl_n, ~(x,y)\mapsto [x,y]$ and this way $\mathrm{Coh}_n(\mbA^2)$ is the cotangent stack of $ \gl_n(\mbC)/GL_n(\mbC)$ for the adjoint representation of $GL_n(\mbC)$;
    \item  preprojective stack for a one loop quiver: the stack of finite-dimensional representations the algebra $\mbC[x,y]$=preprojective algebra of a one-loop quiver, defined in (\ref{prep});
    \item it contains on open substack $\mathrm{}{Loc}_n(\Sigma_1)\subset \mathrm{Coh}_n(\mbA^2)$ of $n$-dimensional $\mbC$-representations of the fundamental group $\pi_1(\Sigma_1)=\mbZ^2$ of a genus $1$ Riemann surface.
\end{itemize}

 We adapt the argument in (\ref{Zhao}) to find wheel conditions for representations of reductive groups. 

 \subsection{The statement}

  Let $V$ be a finite dimensional representation of a complex reductive group $G\times T_s$ for some torus $T_s$ leaving invariant the zero-set $\mu_V^{-1}(0)$ under the $G$-equivariant moment map $\mu_V: T^*V \to \g^*$. Suppose the fixed locus $\mu_V^{-1}(0)^{T\times T_s}$ is a point, the origin. Denote its embbedding by $\pt=\mu_V^{-1}(0)^{T\times T_s}\xrightarrow{i_0} V\oplus V^*$, it is lci and the operation $i_0^*$ in K-theory is well-defined. Denote by $W$ the Weyl group of the pair $(G,T)$. Choosing a basis in $V$ we can talk about coordinate lines. Consider two coordinate lines $l\subset V$ and $l^\prime\subset V^*$ and form a commutative diagram of closed embeddings. The lines are $T\times T_s$-invariant, denote by $\chi_l,~\chi_{l^\prime}$ their $T\times T_s$-characters.

\begin{theorem}\label{whTheorem}
    The image under restriction map
    \begin{align*}
        K_{G\times T_s}(\mu_V^{-1}(0))\xrightarrow{i_0^*p_*}K_{G\times T_s}(\pt)
    \end{align*}
    is contained in the ideal 
    \begin{align}
        K_{T\times T_s}(\pt)^W\cap \bigcap_{\Pi} (1 - \chi_{l}^{-1},1 - \chi_{l^\prime}^{-1})
    \end{align}
    where the intersection is taken over the set $\Pi$ of all pairs of coordinate lines $l\subset V,~l^{\prime}\subset V^*$ such that 
    \begin{align*}
        l\oplus l^{\prime}\times_{V\oplus V^*} \mu_V^{-1}(0)=l\cup_{\{0\}}l^{\prime}
    \end{align*}
    scheme-theoretically.
\end{theorem}

\begin{proof}
We first consider the image under restriction map
\begin{align*}
    K_{T\times T_s}(\mu_V^{-1}(0))\xrightarrow{i_0^*p_*}K_{T\times T_s}
\end{align*}
where $T\subset G$ a maximal torus with the Weyl group $W$, and then symmetrize due to an isomorphism $K_{G\times T_s}\simeq K_{T\times T_s}^W, ~ K_{T\times T_s}=\mbZ[X^*(T)\times X^*(T_s)]$.

Consider two coordinate lines $l\subset V$ and $l^\prime\subset V^*$  such $l\oplus l^{\prime}$ intersected with $\mu_V^{-1}(0)$ inside $T^*V$ is $l\cup l^{\prime}$. Then we have a diagram of closed embeddings with Cartesian square
\[\begin{tikzcd}\label{Cart}
	&& \pt \\
	{l\cup l^{\prime}} & {l\oplus l^{\prime}} \\
	{\mu_V^{-1}(0)} & {V\oplus V^*}
	\arrow["{v_0}"', from=1-3, to=2-2]
	\arrow["{i_0}", curve={height=-12pt}, from=1-3, to=3-2]
	\arrow["{p^\prime}", hook, from=2-1, to=2-2]
	\arrow[hook, from=2-1, to=3-1]
	\arrow["{{{i_V}}}", hook', from=2-2, to=3-2]
	\arrow["p", hook, from=3-1, to=3-2]
\end{tikzcd}\]
 
We are interested in the image of $i_0^*p_*=v_0^*i_V^*p_*=v_0^*p^\prime_*i_V^!$ where the last equality is the base change property ($\ref{prsKth}$). Then we have $\im(j^*)\subset \im(v_0^*p^\prime_*)$. The image of $K_{T\times T_s}(l\cup l^{\prime})\xrightarrow{p_{*}^{\prime}} K_{T\times T_s}(l\oplus l^{\prime})$ is generated by $p^\prime_*[\mathcal{O}_l],~p^\prime_*[\mathcal{O}_{l^\prime}]$ as $K_{T\times T_s}(\pt)$-module. To compute the characters $v_0^*p^\prime_*[\mathcal{O}_l],~ v_0^*p^\prime_*[\mathcal{O}_{l^\prime}]$ we use $T\times T_s$-locally free resolutions
\begin{align*}
    0\to \mathcal{O}_{l\oplus l^\prime}(-l)\to \mathcal{O}_{l\oplus l^\prime} \to p^\prime_* \mathcal{O}_l\to 0\\
    0\to \mathcal{O}_{l\oplus l^\prime}(-l^\prime)\to \mathcal{O}_{l\oplus l^\prime} \to p^\prime_* \mathcal{O}_{l^\prime}\to 0.
\end{align*}
Then we have 
\begin{align*}
    v_0^*p^\prime_*[\mathcal{O}_l] = v_0^*[\mathcal{O}_{l\oplus l^\prime}] - v_0^*[\mathcal{O}_{l\oplus l^\prime}(-l)] = 1 - \chi_l^{-1}.
\end{align*}

 Similarly, 
\begin{align*}
    v_0^*p^\prime_*[\mathcal{O}_{l^{\prime}}] = 1 - \chi_{l^\prime}^{-1}.
\end{align*}

We get the image of $i_0^*p_*$ lies inside an ideal 
\begin{align*}
    (1 - \chi_{l}^{-1},1 - \chi_{l^\prime}^{-1}) \subset K_{T\times T_s}(\pt)
\end{align*}
 thus it lies inside the intersection 
 \begin{align*}
     \im(i_0^*p_*)\subset \bigcap_{\Pi} (1 - \chi_{l}^{-1},1 - \chi_{l^\prime}^{-1})
  \end{align*}
over the set $\Pi$ of all pairs of lines $l,~l^{\prime}$ with the required property.

The image of the same map $K_{G\times T_s}(\mu_V^{-1}(0))\xrightarrow{i_0^*p_*} K_{G\times T_s}$ but in $G\times T_s$-equivariant K-theory $K_{G\times T_s}=K_{T\times T_s}^W$ is given by taking $W$-invariants. By above these $W$-invariants sit inside $W$-invariant polynomials from the intersection. QED

 \end{proof}
 We consider two examples:
 adjoint representations of semisimple groups and irreducible representations of $SL_2(\mbC)$.

\subsection{Examples}

$\bullet ~ G\curvearrowright\g^g$

The first and the third descriptions above of $\mathrm{Coh}_n(\mbA^2)$ suggest the following generalization. Let $G$ be a semisimple group and $\g$ be its Lie algebra, considered as adjoint representation. We will use an isomorphism of Lie algebras $\g\simeq\g^*, ~x\mapsto \Tr(\ad_x\circ \ad_-)$, via the Killing form. Let $T\subset G$ be a maximal torus and $W$ be the Weyl group of $(G,T)$. Let $g\geq 1$ be an integer, and consider the \textit{additive character stack}
\begin{align*}
    \mu_{\g}^{-1}(0)/G
\end{align*}
where 
\begin{align*}
    \mu_{\g}^{-1}(0):=\{(a_1,\dots,a_g,b_1,\dots,b_g)\in \g^{2g}: \sum_{i=1}^g[a_i,b_i]=0\}.
\end{align*}
and $G$ acts by component wise conjugation.

Consider the torus 
\begin{align*}
T_s:=\{(q_1,\dots, q_g, q_1^{\prime}, \dots, q_g^{\prime})\in (\mbC^*)^{2g}: q_1q_1^{\prime} = \dots=q_gq_g^{\prime} \} \simeq (\mbC^*)^{g+1}
\end{align*}
acting on $\g^{2g}$ by scaling, and preserving $\mu_{\g}^{-1}(0)$. The fixed locus $\mu_{\g}^{-1}(0)^{T\times T_s}=\{0\}\hookrightarrow\mu_{\g}^{-1}(0)$ is one point.

It is an additive analogue of the character stack, parameterizing $G$ - local systems on a smooth genus $g$ Riemann surface.

Let 
\begin{align*}
\g = \g_0 \oplus \bigoplus_{\alpha \in \Phi} \g_{\alpha}
\end{align*}
be a root decomposition, with $\Phi$ the set of roots. The root spaces $\g_{\alpha}$ are 1-dimensional. Let $h_1,\dots,h_r$ be a basis in the Cartan subalgebra $\g_0$. In this example the set of $\Pi$ from the theorem consists of the following pairs: for each $1\leq i\leq g$:
\begin{itemize}
    \item  a pair $\g_\alpha^{(i)},~\g_\beta^{(i)}$ such that $\alpha+\beta$ is again a root,
    \item a pair $\g_{\alpha}^{(i)},~\g_{-\alpha}^{(i)}$ for each root $\alpha$,
    \item a pair $\mbC h_k^{(i)},\g^{(i)}_{\alpha}$ for each $1\leq k\leq r$ and a root $\alpha$ such that $\alpha(h_k^{(i)})\neq 0$,
    \item similarly, a pair $\g^{(i)}_{\alpha},\mbC h_k^{(i)}$
    
\end{itemize}

These pairs give the following ideals in $K_{T\times T_s}(\pt)$, respectively:
\begin{itemize}
    \item $(1-q_i^{-1}e^{-\alpha},1-q_i^{\prime-1}e^{-\beta})$,
    \item $(1-q_i^{-1}e^{-\alpha},1-q_i^{\prime-1}e^{\alpha})$,
    \item $(1-q_i^{-1},1-q_i^{\prime-1}e^{-\alpha})$,
    \item $(1-q_i^{-1}e^{-\alpha},1-q_i^{\prime-1})$.
\end{itemize}

\begin{theorem}
        
    The image of restriction map 
    \begin{align*}
        K_{G\times T_s}(\mu_{\g}^{-1}(0)) \xrightarrow{i_0^*p_*} K_{G\times T_s}(\pt)
    \end{align*}
    is included into the ideal
    \begin{align*}
K_{T\times T_s}(\pt)^W \cap
\bigcap_{i=1}^{g}
\bigg[
&
\bigcap_{\substack{\alpha,\beta\in\Phi\\ \alpha+\beta\in\Phi}}
\left(
1-q_i^{-1}e^{-\alpha},
1-q_i^{\prime -1}e^{-\beta}
\right)
\\
&\cap
\bigcap_{\alpha\in\Phi}
\left(
1-q_i^{-1}e^{-\alpha},
1-q_i^{\prime -1}e^{\alpha}
\right)
\\
&\cap
\bigcap_{\substack{\alpha\in\Phi,\; h_k\\ \alpha(h_k)\neq 0}}
\left(
1-q_i^{-1},
1-q_i^{\prime -1}e^{-\alpha}
\right)
\\
&\cap
\bigcap_{\substack{\alpha\in\Phi,\; h_k\\ \alpha(h_k)\neq 0}}
\left(
1-q_i^{-1}e^{-\alpha},
1-q_i^{\prime -1}
\right)
\bigg].
\end{align*}
        
\end{theorem}

The same trick works for adjoint representation $GL_n(\mbC)\curvearrowright\gl_n(\mbC)$, where we identify $\gl_n(\mbC)\simeq \gl_n(\mbC)^*$ via trace form. 

\vspace{1cm}

Consider $Sp_4(\mbC)\curvearrowright\mathfrak{sp_4}(\mbC)$. Denote by $\chi_1,~\chi_2$ the fundamental weights of the adjoint representation.
It has 8 roots:
\[\begin{tikzcd}
	&& {2\chi_2} \\
	& {\chi_2-\chi_1} && {\chi_1+\chi_2} \\
	{-2\chi_1} && {.} && {2\chi_1} \\
	& {-\chi_1-\chi_2} && {\chi_1-\chi_2} \\
	&& {-2\chi_2}
	\arrow[from=3-3, to=1-3]
	\arrow[from=3-3, to=2-2]
	\arrow[from=3-3, to=2-4]
	\arrow[from=3-3, to=3-1]
	\arrow[from=3-3, to=3-5]
	\arrow[from=3-3, to=4-2]
	\arrow[from=3-3, to=4-4]
	\arrow[from=3-3, to=5-3]
\end{tikzcd}\]

Here the set $\Pi$ consists of 
We consider all the pairs of roots $(\alpha, \beta)$ such that $\alpha+\beta$ is again a root. We have 24 of such:
\begin{align*}
    \pm(\chi_1 - \chi_2, \chi_1+\chi_2)
    &\qquad
    \pm(\chi_1+\chi_2, \chi_1 - \chi_2),\\
    \pm(-2\chi_1, \chi_1+\chi_2)
    &\qquad
    \pm(\chi_1+\chi_2, -2\chi_1),\\
    \pm(-2\chi_2, \chi_1+\chi_2)
    &\qquad
    \pm(\chi_1+\chi_2, -2\chi_2),\\
    \pm(-\chi_1 + \chi_2, \chi_1+\chi_2)
    &\qquad
    \pm(\chi_1+\chi_2, -\chi_1 + \chi_2),\\
    \pm(2\chi_2, \chi_1-\chi_2)
    &\qquad
    \pm(\chi_1-\chi_2, 2\chi_2),\\
    \pm(-2\chi_1, \chi_1-\chi_2)
    &\qquad
    \pm(\chi_1-\chi_2, -2\chi_1).
\end{align*}

Also we have root-antiroot pairs:
\begin{align*}
    (\chi_1+\chi_2, -\chi_1-\chi_2) & \qquad (-\chi_1 -\chi_2,\chi_1+\chi_2)\\
    (2\chi_1, -2\chi_1) &\qquad (-2\chi_1,2\chi_1)\\
    (\chi_1 - \chi_2, -\chi_1 +\chi_2) &\qquad (-\chi_1 +\chi_2, \chi_1 -\chi_2)\\
    (2\chi_2,-2\chi_2) & \qquad (-2\chi_2, 2\chi_2).
\end{align*}

Choose a basis $h_1,h_2$ in $\g_0$ such that $\chi_{i}(h_j)=\delta_{ij}$. Then we have Cartan-root pairs:

\[
\begin{aligned}
&\left\{
(\mathbb C h_1,\mathfrak g_\alpha)
:
\alpha\in
\{\pm 2\chi_1,\pm(\chi_1+\chi_2),\pm(\chi_1-\chi_2)\}
\right\}
\\
&\cup
\left\{
(\mathbb C h_2,\mathfrak g_\alpha
:
\alpha\in
\{\pm 2\chi_2,\pm(\chi_1+\chi_2),\pm(\chi_1-\chi_2)\}
\right\}
\\
&\cup
\left\{
(\mathfrak g_\alpha,\mathbb C h_1
:
\alpha\in
\{\pm 2\chi_1,\pm(\chi_1+\chi_2),\pm(\chi_1-\chi_2)\}
\right\}
\\
&\cup
\left\{
(\mathfrak g_\alpha,\mathbb C h_2
:
\alpha\in
\{\pm 2\chi_2,\pm(\chi_1+\chi_2),\pm(\chi_1-\chi_2)\}
\right\}.
\end{aligned}
\]

$\bullet ~SL_2(\mbC)\curvearrowright Sym^n(\mbC)$

Let $G=SL_2(\mbC)$ and $V=Sym^n(\mbC^2)$ be its irreducible representation of dimension $n+1$. Denote by $T=\begin{pmatrix}
    z& 0\\ 0& z^{-1}
\end{pmatrix}\subset SL_2(\mbC)$ the maximal torus. The action of $g\in SL_2(\mbC)$ on the dual $V^*$ is defined by $g.\xi^* (v) = \xi^*(g^{-1}v)$. Let 
\begin{align*}
 T_s = \mbC^*_q   
\end{align*}
 acts on $V$ via the induced action on $\mbC^2$ 
 \begin{align*}
     q.v = qv,~v\in V \qquad q.\xi = q^{-1}\xi,~\xi\in V^*.
 \end{align*}
 This action commutes with the action of $SL_2(\mbC)$. We have 

\begin{theorem}
    The image of the restriction map 
    \begin{align*}
        K_{SL_2(\mbC)\times T_s}(\mu_{Sym^n(\mbC^2)}^{-1}(0)) \to K_{SL_2(\mbC)\times T_s}(\pt)
    \end{align*}
    is included into the ideal 
    \begin{align*}
K_{T\times T_s}(\pt)^{S_2}\cap
&
\bigcap_{\substack{k+l=n\\ l\geq 1}}
\left(
1-z^{-(k-l)}q^{-n}, 1-z^{k-l+2}q^{n}
\right)
\\
&\cap
\bigcap_{\substack{k+l=n\\ k\geq 1}}
\left(
1-z^{-(k-l)}q^{-n}, 1-z^{k-l-2}q^n
\right)
\\
&\cap
\bigcap_{\substack{k+l=n\\ k,l\geq 0\\ k\neq l}}
\left(
1-z^{-(k-l)}q^{-n}, 1-z^{k-l}q^n
\right).
\end{align*}

    where $S_2$ acts on monomials by $z^aq\mapsto z^{-a}q$
\end{theorem}

\begin{proof}

 Choose a basis in $V$ 
\begin{align*}
    V = \mbC\langle e_1^ke_2^l:k+l=n \rangle,
\end{align*}
where $e_1,e_2$ is a basis in $\mbC^2$. Denote by $e_1^*,e_2^*$ the dual basis, and by $(e_1^*)^k(e_2^*)^l$ the dual basis in the dual vector space $V^*$.
Choose a standard basis $e,f,h$ in $\mathfrak{sl}_2(\mbC)$ in which the Lie bracket is $[h,e] = 2e,[h,f] = -2f, [e,f] = h$.

The action of $\mathfrak{sl}_2$ on basis vectors in $\mbC^2$ and its dual is 
\begin{align*}
    ee_1 = 0, fe_2 = 0, &&  ee_1^* = 0, fe_2^* = 0,\\
    ee_2 = e_1, fe_1 = e_2, && ee_2^* = -e_1^*, fe_1^* = -e_2^*\\
    he_1 = e_1, he_2 = -e_2 && he_1^* = -e_1^*, he_2^* = e_2^*. 
\end{align*}
Action on basis vectors of
$V$ and $V^*$ is defined by 
\begin{align*}
    e(e_1^ke_2^l) = le_1^{k+1}e_2^{l-1}, && e((e_1^*)^k(e_2^*)^l) = -l(e_1^*)^{k+1}(e_2^*)^{l-1}\\
    f(e_1^{k}e_2^l) = ke_1^{k-1}e_2^{l+1}, && f((e_1^*)^k(e_2^*)^l) = -k(e_1^*)^{k-1}(e_2^*)^{l+1}\\
    h(e_1^ke_2^l) = (k-l)e_1^ke_2^l, && h((e_1^*)^k(e_2^*)^l) = (-k+l)(e_1^*)^k(e_2^*)^l
\end{align*}
We compute the pairings 
\begin{align*}
    \langle (e_1^*)^a(e_2^*)^b, e.(e_1^{k}e_2^{l})\rangle = l\delta_{a,k+1}\delta_{b,l-1}\\
    \langle (e_1^*)^a(e_2^*)^b, f.(e_1^{k}e_2^{l})\rangle = k\delta_{a,k-1}\delta_{b,l+1}\\
    \langle (e_1^*)^a(e_2^*)^b, h.(e_1^{k}e_2^{l})\rangle = (k-l)\delta_{a,k}\delta_{b,l}.
\end{align*}

The $T\times T_s$-characters of 
\begin{align*}
 e_1^ke_2^l,  \qquad (e_1^*)^{k}(e_
2^*)^l \end{align*} are
\begin{align*}
    z^{k-l}q^{k+l}, \qquad z^{-(k-l)}q^{-k-l}.
\end{align*}

The zero level of the moment map consists of pairs 
\begin{align*}
    \mu_V^{-1}(0) = \{(x,x^*)\in V\oplus V^*: \langle x^*, a.x\rangle = 0~ \forall a\in \mathfrak{sl}_2(\mbC)\}
\end{align*}

Consider the fiber diagram
\[\begin{tikzcd}
	& {\mbC e_1^ke_2^l\oplus \mbC(e_1^*)^a(e_2^*)^b} \\
	{\mu_V^{-1}(0)} & {V\oplus V^*}
	\arrow["{{{i_V}}}", hook', from=1-2, to=2-2]
	\arrow["p", hook, from=2-1, to=2-2]
\end{tikzcd}\]

We would like to impose conditions on $(a,b,k,l), k+l = n$ such that the fiber product be the union of two coordinate lines, intersecting along the origin. The set $\Pi$ of admissible lines is:   

\begin{align*}
\mbC e_1^ke_2^l \cup_{\{0\}} \mbC(e_1^*)^{k+1}(e_2^*)^{l-1},~ l\geq 1 &&   \mbC e_1^ke_2^l\cup_{\{0\}} \mbC(e_1^*)^{k-1}(e_2^*)^{l+1},~ k\geq 1  
\end{align*}
\begin{align*}
    \mbC e_1^ke_2^l \cup_{\{0\}} \mbC(e_1^*)^{k}(e_2^*)^{l}, ~k\neq l
\end{align*}

and the corresponding ideals are:
\begin{align*}
    (1-z^{-(k-l)}q^{-n}, 1-z^{k-l+2}q^{n})&&(1-z^{-(k-l)}q^{-n}, 1-z^{k-l-2}q^n)
\end{align*}
\begin{align*}
    (1-z^{-(k-l)}q^{-n}, 1-z^{k-l}q^n)
\end{align*}

We are done. 

\end{proof}

\end{document}